\documentclass{amsart}
\numberwithin{equation}{section}
\newtheorem{theorem}{Theorem}[section]
\newtheorem{corollary}[theorem]{Corollary}
\newtheorem{proposition}[theorem]{Proposition}
\newtheorem{conjecture}[theorem]{Conjecture}
\newtheorem{lemma}[theorem]{Lemma}
\theoremstyle{definition}
\newtheorem{definition}[theorem]{Definition}

\newtheorem{example}[theorem]{Example}

\theoremstyle{remark}
\newtheorem{remark}[theorem]{\bf\em Remark}

\usepackage{graphicx}
\usepackage{amsfonts}
\usepackage{amssymb}
\usepackage{amsmath}
\usepackage{color}
\usepackage[inner=1in,outer=1in,bottom=1in,top=1in]{geometry}

\newcommand{\leg}[2]{\left(\frac{#1}{#2}\right)}

\makeatletter
\def\Ddots{\mathinner{\mkern1mu\raise\p@
\vbox{\kern7\p@\hbox{.}}\mkern2mu
\raise4\p@\hbox{.}\mkern2mu\raise7\p@\hbox{.}\mkern1mu}}
\makeatother

\begin{document}
\title[Recursions and trapezoid, symmetric and rotation symmetric functions]{Recursions associated to trapezoid, symmetric and rotation symmetric functions over Galois fields}

\author{Francis N. Castro}
\address{Department of Mathematics, University of Puerto Rico, San Juan, PR 00931}
\email{franciscastr@gmail.com}

\author{Robin Chapman}
\address{Department of Mathematics, University of Exeter, Exeter, EX4 4QF, UK}
\email{r.j.chapman@exeter.ac.uk}

\author{Luis A. Medina}
\address{Department of Mathematics, University of Puerto Rico, San Juan, PR 00931}
\email{luis.medina17@upr.edu}

\author{L. Brehsner Sep\'ulveda}
\address{Department of Mathematics, University of Puerto Rico, San Juan, PR 00931}
\email{leonid.sepulveda1@upr.edu}

\begin{abstract}
Rotation symmetric Boolean functions are invariant under circular translation of indices.  These functions have very rich cryptographic properties and have been used in different cryptosystems.
Recently, Thomas Cusick proved that exponential sums of rotation symmetric Boolean functions satisfy homogeneous linear recurrences with integer coefficients.  In this work, a generalization of this 
result is proved over any Galois field.  That is, exponential sums over Galois fields of rotation symmetric polynomials satisfy linear recurrences with integer coefficients.
 In the particular case of $\mathbb{F}_2$, an elementary method is used to obtain explicit recurrences for exponential sums of some of these functions.  The concept of trapezoid Boolean function is also introduced and it is showed that the linear recurrences that exponential sums of trapezoid Boolean functions satisfy are the same as the ones satisfied by exponential sums of 
the corresponding rotations symmetric Boolean functions.  Finally, it is proved that exponential sums of trapezoid and symmetric polynomials also satisfy linear recurrences with integer coefficients over any 
Galois field $\mathbb{F}_q$.  Moreover, the Discrete Fourier Transform matrix and some Complex Hadamard matrices appear as examples in some of our explicit formulas of these recurrences.
\end{abstract}

\subjclass[2010]{05E05, 11T23, 11B50}
\date{\today}
\keywords{rotation functions, trapezoid functions, symmetric polynomials, exponential sums, recurrences}

\maketitle
\section{Introduction}
A Boolean function is a function from the vector space $\mathbb{F}_2^n$ to  $\mathbb{F}_2$.  Boolean functions are part of a beautiful branch of combinatorics with applications to many scientific areas.  
Some particular examples are the areas of theory of error-correcting codes and cryptography.   Efficient cryptographic implementations of Boolean functions with many variables is a challenging problem due to memory 
restrictions of current technology. Because of this, symmetric Boolean functions are good candidates for efficient implementations. However, symmetry is a too special property and may imply that these 
implementations are vulnerable to attacks.

In \cite{piequ}, Pieprzyk and Qu introduced rotation symmetric Boolean functions. A {\it rotation symmetric Boolean function} in $n$ variables is a function which is invariant under the action of the cyclic group $C_n$ on the set $\mathbb{F}_2^n$.   For example, let $X_i\in \mathbb{F}_2$ for $1\leq i\leq n$.  Define, for $1\leq k \leq n$, the shift function
\begin{equation}
E_n^k(X_i) = \begin{cases}
 X_{i+k} & \text{ if }i+k\leq n, \\
 X_{i+k-n} & \text{ if }i+k>n.
\end{cases}
\end{equation}
Extend this definition to $\mathbb{F}_2^n$ by defining
\begin{equation}
E_n^k(X_1,X_2,\cdots, X_n) = (E_n^k(X_1),E_n^k(X_2),\cdots, E_n^k(X_n)).
\end{equation}
The shift function $E_n^k$ can also be extended to monomials via
\begin{equation}
E_n^k(X_{i_1}X_{i_2}\cdots X_{i_t}) = E_n^k(X_{i_1})E_n^k(X_{i_2}),\cdots E_n^k(X_{i_t}).
\end{equation}
A Boolean function $F({\bf X})$ in $n$ variables is a rotation symmetric Boolean function if and only if for any $(X_1\cdots, X_n)\in\mathbb{F}_2^n$, 
\begin{equation}
F(E_n^k(X_1\cdots, X_n))=F(X_1\cdots, X_n)
\end{equation}
for every $1\leq k\leq n$.
Pieprzyk and Qu showed that these functions are useful in the design of fast hashing algorithms with strong cryptographic properties.  This work sparked interest in these functions and today their study is an active area of research \cite{BCP,cusickjohns, cusickstanica, dalaimaitrasarkar, hell, maxhellmaitra, stanicamaitra, stanicamaitraclark}.

Every Boolean function in $n$ variables can be identified with a multi-variable Boolean polynomial.  This polynomial is known as the algebraic normal form (ANF for short) of the Boolean function.  The 
degree of a Boolean function $F({\bf X})$ is the degree of its ANF.  The ANF of a rotation symmetric Boolean function is very well-structured.  For example, suppose we have a rotation symmetric Boolean function in 5 variables.  Suppose that $X_1X_2X_3$ is part of the ANF of the function.  Then, the terms
\begin{eqnarray}
E_5^1(X_1X_2X_3)&=&X_2X_3X_4\\\nonumber
E_5^2(X_1X_2X_3)&=&X_3X_4X_5\\\nonumber 
E_5^3(X_1X_2X_3)&=&X_4X_5X_1\\ \nonumber
E_5^4(X_1X_2X_3)&=&X_5X_1X_2
\end{eqnarray}
are also part of its ANF.  Similarly, suppose that $X_1X_3$ is also a term of the ANF.  Then, 
$$X_2X_4, X_3X_5, X_4X_1, X_5X_2$$
are also part of the ANF.  An example of a rotation symmetric Boolean function with this property is given by
\begin{eqnarray}
\label{rotex}
R({\bf X}) &=&X_1X_2X_3+X_2X_3X_4+X_3X_4X_5+X_4X_5X_1+ X_5X_1X_2+\\\nonumber
&&X_1X_3+X_2X_4+ X_3X_5+ X_4X_1+ X_5X_2.
\end{eqnarray}
Therefore, once a monomial $X_{i_1}\cdots X_{i_t}$ is part of the ANF of a rotation symmetric Boolean function, so is $E_n^k(X_{i_1}\cdots X_{i_t})$ for all $1\leq k \leq n$.  This implies that the 
information encoded in the ANF of a rotation symmetric Boolean function can be obtained with minimal information.  Define the set
\begin{equation}
RSet_n(X_{i_1}\cdots X_{i_t})=\{E_n^{k}(X_{i_1}\cdots X_{i_t})\,|\, 1\leq k \leq n\}.
\end{equation}
For example,
\begin{equation}
RSet_5(X_1X_2X_3)=\{X_2X_3X_4, X_3X_4X_5, X_4X_5X_1, X_5X_1X_2, X_1X_2X_3\}.
\end{equation}
Select as a representative for the set $RSet_n(X_{i_1}\cdots X_{i_t})$ the first element in the lexicographic order.  For example, the representative for 
$$\{X_2X_3X_4, X_3X_4X_5, X_4X_5X_1, X_5X_1X_2, X_1X_2X_3\}$$
is $X_1X_2X_3$.  Observe that if the rotation symmetric Boolean function is not constant, then $X_1$ always appears in the lexicographically first element of $RSet_n(X_{i_1}\cdots X_{i_t})$.
The {\it short algebraic normal form} (or SANF) of a rotation symmetric Boolean function is a function of the form
\begin{equation}
a_0 +a_1 X_1+\sum a_{1,j}X_1X_j+\cdots+a_{1,2,\cdots,n} X_1X_2\cdots X_n,
\end{equation}
where $a_0,a_1, a_{1,j},\cdots, a_{1,2,\cdots,n} \in \mathbb{F}_2$ and the existence of the term $X_1X_{i_2}\cdots X_{i_t}$ implies the existence of every term in $$RSet_n(X_1X_{i_2}\cdots X_{i_t})$$
in the ANF.  For example, the SANF of the rotation symmetric Boolean function (\ref{rotex}) is given by
\begin{equation}
 X_1X_3+X_1X_2X_3.
\end{equation}

Let $1<j_1<\cdots< j_s$ be integers.  A rotation symmetric Boolean function of the form 
\begin{equation}
\label{monrot}
R_{j_1,\cdots, j_s}(n)=X_1X_{j_1}\cdots X_{j_s}+X_2X_{j_1+1}\cdots X_{j_s+1}+\cdots+X_nX_{j_1-1}\cdots X_{j_s-1},
\end{equation}
where the indices are taken modulo $n$ and the complete system of residues is $\{1,2,\cdots, n\}$, is called a {\it monomial rotation symmetric} Boolean function.  For example, the rotation symmetric 
Boolean function (\ref{rotex}) is given by
\begin{equation}
 R({\bf X}) = R_{2,3}(5) + R_{3}(5).
\end{equation}
Sometimes the notation $(1,j_1,\cdots, j_s)_n$ is used to represent the monomial rotation Boolean function (\ref{monrot}), see \cite{cusickArXiv}.

In some applications related to cryptography it is important for Boolean functions to be balanced.  A balanced Boolean function is one for which the number of zeros and the number 
of ones are equal in its truth table.  Balancedness of Boolean functions can be studied from the point of view of exponential sums.  The {\it exponential sum} of an $n$-variable Boolean function $F({\bf X})$ is defined as
\begin{equation}
 S(F)=\sum_{{\bf x}\in \mathbb{F}_2^n} (-1)^{F({\bf x})}.
\end{equation}
Observe that a Boolean function $F({\bf X})$ is balanced if and only if $S(F)=0$.  This gives importance to the study of exponential sums.  This point of view is also a very active area of research. For 
some examples, please refer to \cite{sperber, ax, cm1, cm2, cm3, cusick4, fspectrum, mm1, mm, fdegree}.

Let $F({\bf X})$ be a Boolean function.  List the elements of $\mathbb{F}_2^ n$ in lexicographic order and label them as ${\bf x}_0=(0,0,\cdots, 0)$, ${\bf x}_{1}=(0,0,\cdots, 1)$ and so on.  The vector $(F({\bf x}_0),F({\bf x}_1),\cdots, F({\bf x}_{2^n-1}))$ is called the {\it truth table} or $F$.  The {\it Hamming weight} of $F$, denoted by $\text{wt}(F)$, is the number of 1's in the truth table of $F$.  Observe that a Boolean function in $n$ variables is balanced if and only if its Hamming weight is $2^{n-1}$.  The Hamming weight of a Boolean function $F$ and its exponential sums are related by the equation
\begin{equation}
\label{weightexpsum}
\text{wt}(F)=\frac{2^n-S(F)}{2}.
\end{equation}

The study of weights of rotations symmetric Boolean functions has received some attention lately \cite{BCP, cusickjohns, cusickstanica,stanicamaitra}.  In particular, it has been observed that weights of cubic  rotation symmetric Boolean functions are linear recursive with constant coefficients \cite{BCP, cusickjohns}.  Recently, Cusick \cite{cusickArXiv} showed that weights of any rotation symmetric 
Boolean function satisfy linear recurrences with integer coefficients.  Since the exponential sum and the weight function of a Boolean function are related by (\ref{weightexpsum}), then it is also true 
that exponential sums of rotation symmetric Boolean functions satisfy linear recurrences with integer coefficients.

One of the most important results in this work is a generalization of Cusick's Theorem over any Galois field.  To be specific, let $q=p^r$ with $p$ prime and $r\geq1$. Exponential sums over $\mathbb{F}_q$ 
of monomial rotation symmetric polynomials (and linear combinations of them) satisfy homogeneous linear recurrences with integer coefficients.  Remarkably, this can be proved by elementary means.  
Another important result included in this work is that exponential sums over $\mathbb{F}_q$ of elementary symmetric polynomials and linear combinations of them also satisfy linear recurrences with integer 
coefficients.  Surprisingly, the Discrete Fourier Transform matrix, some Complex Hadamard matrices and the quadratic Gauss sum mod $p$ appear in the study of the recurrences considered in this work.

This article is divided as follows. The next section is an introduction of the elementary method used to obtain the recurrences.  This introduction is done over $\mathbb{F}_2$ in order to solidify the 
intuition.  The reader interested in the generalization is invited to skip this section, however, the reader is encouraged to read the definition of trapezoid functions, as they are used through 
out the article. In section \ref{anyGalois} linear recurrences with integer coefficients are obtained for exponential sums trapezoid functions over Galois fields.  Moreover, it is in this section where 
it is proved that exponential sums over $\mathbb{F}_q$ of monomial rotation symmetric polynomials and linear combinations of them satisfy linear recurrences with integer coefficients.  The same technique 
is used in the section \ref{symmetriccase} to prove that exponential sums over $\mathbb{F}_q$ of elementary symmetric polynomials and linear combinations of them also satisfy linear recurrences with 
integer coefficients.  Finally, in the last section, some conjectures about the initial conditions of some of the sequences considered in this work are presented.


\section{Linear recurrences over $\mathbb{F}_2$}

As mentioned in the introduction, Cusick \cite{cusickArXiv} recently showed that exponential sums of rotation symmetric Boolean functions satisfy homogeneous linear recurrences with integer coefficients.  
This fact was suggested by some previous works on the subject. For example, in \cite{cusickstanica}, Cusick and St$\check{\mbox{a}}$nic$\check{\mbox{a}}$ provided a linear recursion for the sequence of
weights for the monomial rotation function $(1,2,3)_n$.  This recursion, however, was not homogeneous, but it could be transformed into a homogeneous one, see \cite{BCP}.
Later, Cusick and Johns \cite{cusickjohns} provided recursions for  weights of cubic rotation symmetric Boolean functions.

In this section we use elementary machinery to provide explicit homogeneous linear recurrences with integer coefficients for exponential sums of some rotation symmetric Boolean functions.  The idea is to
show that exponential sums of rotation symmetric Boolean functions satisfy the same linear recurrences of exponential sums of trapezoid Boolean functions (see definition below).  We prove this fact
using elementary machinery and, at this early stage, without the use linear algebra.  In the next section we show that exponential sums of rotation symmetric functions over any Galois field satisfy 
linear recurrences.  The reader interested in this generalization may skip this section, but not before reading the definition of trapezoid functions.

Define the {\it trapezoid} Boolean function in $n$ variables of degree $k$ as
\begin{equation}
 \tau_{n,k} = \sum _{j=1}^{n-k+1} X_jX_{j+1}\cdots X_{j+k-1}.
\end{equation}
For example,
\begin{eqnarray*}
 \tau_{7,3}&=& X_1 X_2 X_3+X_2 X_3 X_4+X_3 X_4 X_5+X_4 X_5 X_6+X_5 X_6 X_7\\
 \tau_{6,4}&=& X_1 X_2 X_3 X_4+X_2 X_3 X_4 X_5+X_3 X_4 X_5 X_6.
\end{eqnarray*}
The name trapezoid comes from counting the number of times each variable appears in the function $\tau_{n,k}$.  For example, consider $\tau_{7,3}$.  Observe that $X_1$ appears 1 time in $\tau_{7,3}$, $X_2$ 
appears 2 times, $X_3$, $X_4$ and $X_5$ appears 3 times each, $X_6$ appears twice, and $X_7$ appears once.  Plotting the these values and connecting the dots produces the shape of an isosceles trapezoid. 
Figure \ref{trap73} is a graphical representation of this.  The Boolean variable $X_i$ is represented by $i$ in the $x$-axis. The $y$-axis corresponds to the number of times the variable appears in $\tau_{7,3}$.
\begin{figure}[h!]
\centering
\includegraphics[width=3in]{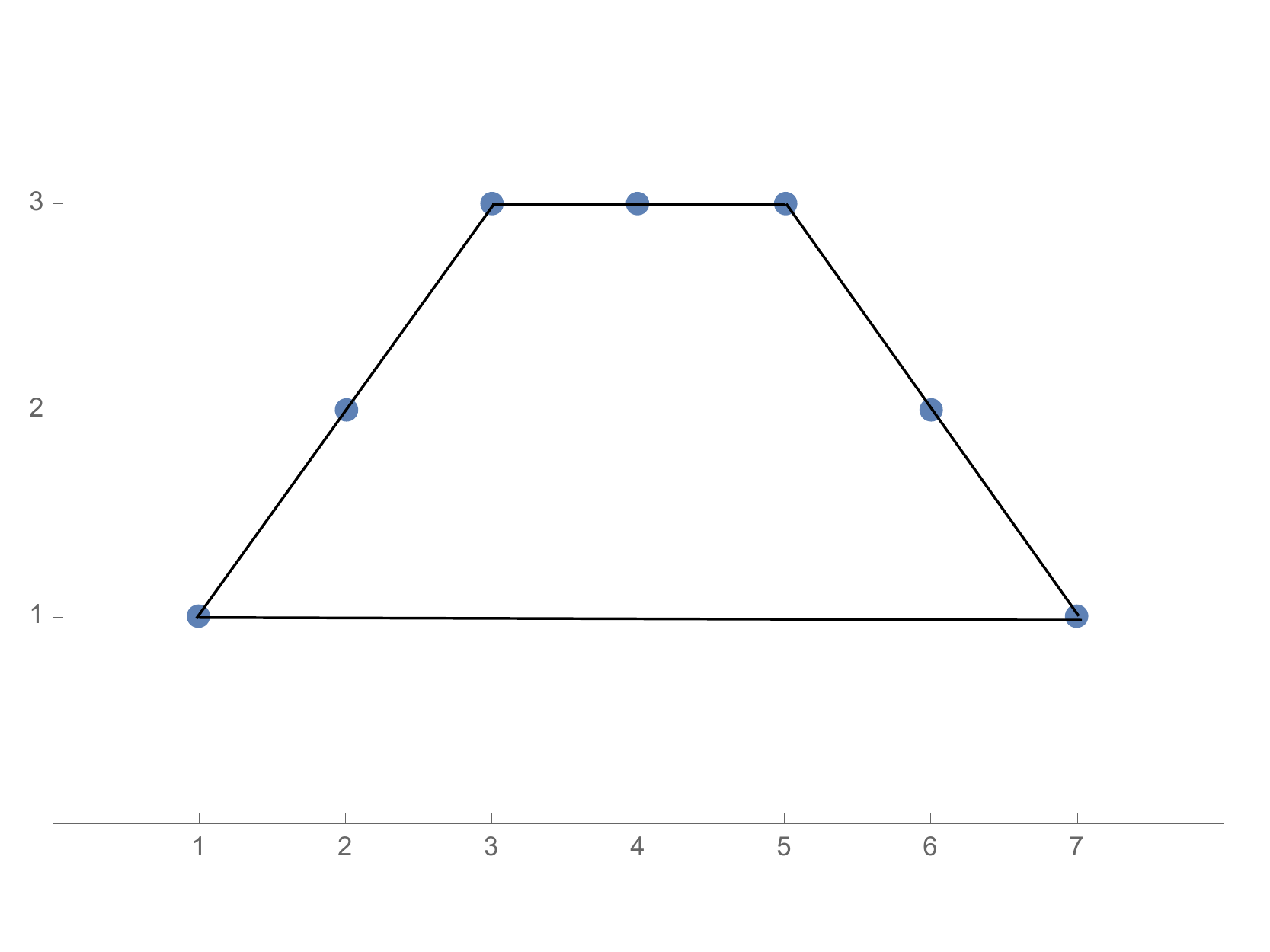}
\caption{Trapezoid associated to the Boolean function $\tau_{7,3}$}
\label{trap73}
\end{figure}

The opposite is also true, that is, for every isosceles trapezoid that can be constructed by steps of length at most 1, one can construct a trapezoid Boolean function.

It turns out that sequences of exponential sums of trapezoid Boolean functions of fixed degree satisfy homogeneous linear recurrences with integer coefficients.  These linear recurrences are the same 
satisfied by sequences of exponential sums of $(1,2,\cdots,k)$-rotation symmetric Boolean functions.  Remarkably, this fact can be proved by elementary means by ``playing" a simple game of turning 
{\it ON} and {\it OFF} some of the variables.   Given a Boolean variable $X_i$, we say that it is turned {\it OFF} if $X_i$ assumes the value 0 and turned {\it ON} if the variable assumes the value 1.
In other words, each Boolean variable represents a ``switch" with two options: 0 ({\it OFF}) and 1 ({\it ON}).

We start the discussion with the recurrence for exponential sums of trapezoid Boolean functions.

\begin{theorem}
\label{trapezoidTHM}
 The sequence $\{S(\tau_{n,k})\}_{n=k}^\infty$ satisfies a homogeneous linear recurrence with integer coefficients whose characteristic polynomial is given by 
 \begin{equation}
 \label{charpoly}
  p_k(X)=X^k -2 (X^{k-2}+X^{k-3}+\cdots+X+1).
 \end{equation}
\end{theorem}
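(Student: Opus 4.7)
Write $a_n := S(\tau_{n,k})$.  The plan is to establish the explicit recurrence
\[
a_n \;=\; 2\bigl(a_{n-2} + a_{n-3} + \cdots + a_{n-k}\bigr),
\]
whose characteristic polynomial is precisely $p_k(X)$.  The main idea is to condition on the length of the initial run of ones in $(X_1,\ldots,X_n)$.  For each $r \in \{0,1,\ldots,k-1\}$, the event $X_1 = \cdots = X_r = 1,\ X_{r+1} = 0$ forces every monomial $X_j X_{j+1} \cdots X_{j+k-1}$ with $1 \le j \le r+1$ to vanish, because each such monomial contains the factor $X_{r+1}$.  What remains is $\tau_{n-r-1,k}(X_{r+2},\ldots,X_n)$, contributing $a_{n-r-1}$.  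Summing these $k$ cases produces $a_{n-1} + a_{n-2} + \cdots + a_{n-k}$.

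The only remaining case is $X_1 = \cdots = X_k = 1$, and here direct substitution in the definition of $\tau_{n,k}$ gives, for $n \ge 2k$, the identity
\[
\tau_{n,k}\big|_{X_1=\cdots=X_k=1} = 1 + Y_1 + Y_1 Y_2 + \cdots + Y_1 Y_2 \cdots Y_{k-1} + \tau_{n-k,k}(Y_1,\ldots,Y_{n-k}),
\]
where $Y_i := X_{k+i}$.  Introducing the auxiliary sum
\[
B_m \;:=\; \sum_{Y \in \mathbb{F}_2^m} (-1)^{\tau_{m,k}(Y) + Y_1 + Y_1 Y_2 + \cdots + Y_1 Y_2 \cdots Y_{k-1}},
\]
the contribution of this last case to $a_n$ is exactly $-B_{n-k}$ (the sign coming from the constant term $1$), so combining everything we arrive at
\begin{equation*}
a_n = a_{n-1} + a_{n-2} + \cdots + a_{n-k} - B_{n-k}. \qquad (\ast)
\end{equation*}

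To derive a short recursion for $B_m$, I condition on $Y_1$ in exactly the same spirit.  If $Y_1 = 0$, every triangle term vanishes and $\tau_{m,k}$ collapses to $\tau_{m-1,k}(Y_2,\ldots,Y_m)$, contributing $a_{m-1}$.  If $Y_1 = 1$, the triangle becomes $1 + Y_2 + Y_2 Y_3 + \cdots + Y_2 \cdots Y_{k-1}$ while the leading monomial of $\tau_{m,k}$ contributes an extra $Y_2 \cdots Y_k$; in the relabelled variables $W_i := Y_{i+1}$ these combine (modulo $2$) into $1 + W_1 + W_1 W_2 + \cdots + W_1 \cdots W_{k-1}$, which is the defining expression of $B_{m-1}$ up to the additive constant $1$.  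Hence this case contributes $-B_{m-1}$, giving
\begin{equation*}
B_m + B_{m-1} = a_{m-1}. \qquad (\ast\ast)
\end{equation*}
Writing $(\ast)$ at indices $n$ and $n-1$, adding the two identities, and using $(\ast\ast)$ to replace $B_{n-k}+B_{n-k-1}$ by $a_{n-k-1}$, the terms $a_{n-1}$ and $a_{n-k-1}$ cancel and each of $a_{n-2},\ldots,a_{n-k}$ appears twice, producing the claimed recurrence.  The main obstacle, and the heart of the argument, is the exact bookkeeping of the triangle of partial products left behind when a prefix of ones is substituted into $\tau_{n,k}$; the clean identity $(\ast\ast)$ hinges on the fact that, upon splitting $B_m$ on $Y_1$, the leaking monomial $Y_2 \cdots Y_k$ merges precisely with the reindexed triangle to restore the defining shape of $B_{m-1}$.
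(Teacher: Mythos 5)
Your argument is correct, and it is essentially the paper's own proof viewed in a mirror: conditioning on the leading variables rather than peeling off $X_n, X_{n-1},\ldots$ from the end, with your auxiliary sum $B_m$ (trapezoid plus the triangle of partial products) playing exactly the role of the paper's $a_{n,k}=S(\tau_{n,k}+X_n+X_nX_{n-1}+\cdots+X_n\cdots X_{n-k+2})$, and your identity $(\ast\ast)$ matching the paper's relation $S(\tau_{n,k})=a_{n+1,k}+a_{n,k}$. The only substantive difference is that you carry out the bookkeeping uniformly for general $k$, whereas the paper writes out only $k=3,4$ and sketches the rest.
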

\begin{proof}
For the sake of simplicity, we present, in detail, the proof for the cases $k=3$ and $k=4$.  The general case becomes clear after that.  Moreover, the complete proof of a generalization of this theorem over any Galois field is presented in section \ref{anyGalois}.
 
Start with the case $k=3$.  Observe that by turning $X_n$ {\it OFF} and {\it ON} we get the identity
 \begin{equation}
 \label{firsttau}
  S(\tau_{n,3}) = S(\tau_{n-1,3})+S(\tau_{n-1,3}+X_{n-2}X_{n-1}).
 \end{equation}
Consider now $S(\tau_{n-1,3}+X_{n-2}X_{n-1})$.  Turn $X_{n-1}$ {\it OFF} and {\it ON} to get
\begin{equation}
\label{secondtau}
 S(\tau_{n-1,3}+X_{n-2}X_{n-1})=S(\tau_{n-2,3})+S(\tau_{n-2,3}+X_{n-2}+X_{n-3}X_{n-2}).
\end{equation}
Finally, turn $X_{n-2}$ {\it OFF} and {\it ON} to get
\begin{equation}
 S(\tau_{n-2,3}+X_{n-2}+X_{n-3}X_{n-2})=S(\tau_{n-3,3})-S(\tau_{n-3,3}+X_{n-3}+X_{n-4}X_{n-3}).
\end{equation}
The last equation is equivalent (after relabeling) to 
\begin{equation}
\label{thirdtau}
S(\tau_{n,3})=S(\tau_{n+1,3}+X_{n+1}+X_{n}X_{n+1})+S(\tau_{n,3}+X_{n}+X_{n-1}X_{n}).
\end{equation}

Observe that equations (\ref{firsttau}) and (\ref{secondtau}) can be combined to obtain
\begin{equation}
\label{fourthtau}
S(\tau_{n,3})=S(\tau_{n-1,3})+S(\tau_{n-2,3})+ S(\tau_{n-2,3}+X_{n-2}+X_{n-3}X_{n-2}).
\end{equation}
Let $a_{n,3} =S(\tau_{n,3}+X_{n}+X_{n-1}X_{n})$.  Note that (\ref{thirdtau}) implies that $S(\tau_{n,3})=a_{n+1,3}+a_{n,3}$.  Therefore, (\ref{fourthtau}) can be re-written as
\begin{eqnarray}
(a_{n+1,3}+a_{n,3}) = (a_{n,3}+a_{n-1,3})+(a_{n-1,3}+a_{n-2,3})+a_{n-2,3},
\end{eqnarray}
which is equivalent to 
\begin{equation}
a_{n+1,3} = 2a_{n-1,3}+2a_{n-2,3}.
\end{equation}
This implies that $\{a_{n,3}\}$ satisfies the linear recurrence whose characteristic polynomial is given by $p_3(X)$.  Since $S(\tau_{n,3})=a_{n+1,3}+a_{n,3}$, then $\{S(\tau_{n,3})\}$ also satisfies such recurrence and the result holds for $k=3$.

Consider now the case when $k=4$.  As it was done in the case when $k=3$,  turning {\it OFF} and {\it ON} several variables leads to 
\begin{eqnarray}
\label{fifthtau}
S(\tau_{n,4})&=&S(\tau_{n-1,4})+S(\tau_{n-2,4})+S(\tau_{n-3,4})\\\nonumber
&&+S(\tau_{n-3,4}+X_{n-3}+X_{n-4}X_{n-3}+X_{n-5}X_{n-4}X_{n-3})
\end{eqnarray}
and
\begin{eqnarray}
S(\tau_{n,4})&=& S(\tau_{n+1,4}+X_{n+1}+X_{n}X_{n+1}+X_{n-1}X_{n}X_{n+1})\\\nonumber
&&+S(\tau_{n,4}+X_{n}+X_{n-1}X_{n}+X_{n-2}X_{n-1}X_{n}).
\end{eqnarray}
Now let $a_{n,4}=S(\tau_{n,4}+X_{n}+X_{n-1}X_{n}+X_{n-2}X_{n-1}X_{n})$ and observe that (\ref{fifthtau}) can be re-written as
\begin{equation}
(a_{n+1,4}+a_{n,4})=(a_{n,4}+a_{n-1,4})+(a_{n-1,4}+a_{n-2,4})+(a_{n-2,4}+a_{n-3,4})+a_{n-3,4},
\end{equation}
which is equivalent to
\begin{equation}
a_{n+1,4} = 2a_{n-1,4}+2a_{n-2,4}+2a_{n-3,4}.
\end{equation}
Therefore, $\{a_{n,4}\}$ satisfies the linear recurrence whose characteristic polynomial is given by $p_4(X)$.  Since $S(\tau_{n,4})=a_{n+1,4}+a_{n,4}$, then $\{S(\tau_{n,4})\}$ also satisfies such 
recurrence and the result also holds for $k=4$.  

In general, $S(\tau_{n,k})$ can be expressed as
\begin{align}
S(\tau_{n,k})&=\sum_{i=1}^{k-1} S(\tau_{n-i,k})+S\left(\tau_{n-k+1,k}+\sum_{j=0}^{k-2} \prod_{i=0}^j X_{n-k+1-i}\right)
\end{align}
and as
\begin{align}
S(\tau_{n,k})&=S\left(\tau_{n+1,k}+\sum_{j=0}^{k-2} \prod_{i=0}^j X_{n+1-i}\right)+S\left(\tau_{n,k}+\sum_{j=0}^{k-2} \prod_{i=0}^j X_{n-i}\right).
\end{align}
Combine these equations and proceed as before to obtain the result.  This concludes the proof.
\end{proof}

It turns out that the sequence of exponential sums of $(1,2,\cdots,k)$-rotation symmetric Boolean functions, that is, of $R_{2,3,\cdots, k}(n)$, also satisfies the linear recurrence whose characteristic 
polynomial is given $p_k(X)$. This is a well-known result for the case when $k=3$ (\cite{BCP, cusickjohns}), but, to the knowledge of the authors, the closed formula for the general case is new. 
Before proving that $\{S(R_{2,3,\cdots, k}(n))\}$ satisfies the linear recurrence with characteristic polynomial $p_k(X)$, we show an auxiliary result which can be proved using the same arguments as in the 
proof of Theorem \ref{trapezoidTHM}.

\begin{lemma}
\label{trapezoidplusLemma}
 Let $\tau_{n,k}$ be the trapezoid Boolean function of degree $k$ in $n$ variables.  Suppose that $F({\bf X})$ is a Boolean polynomial in the first $j$ variables with $j<k$.  Then, the sequences
 $$\{S(\tau_{n,k}+F({\bf X}))\}$$ and $$\{S(\tau_{n,k}+F({\bf X})+X_n+X_nX_{n-1}+X_nX_{n-1}X_{n-2}+\cdots+X_nX_{n-1}\cdots X_{n-k+2})\}$$ satisfies the linear recurrence whose characteristic polynomial 
 is given by $p_k(X)$.
\end{lemma}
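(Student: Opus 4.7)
The plan is to repeat, almost verbatim, the turning-\emph{ON}/\emph{OFF} argument from the proof of Theorem \ref{trapezoidTHM}, but carrying the extra summand $+F(\mathbf{X})$ along inside every exponential sum. The key observation that makes this legitimate is that $F$ involves only $X_1,\dots,X_j$ with $j<k$, whereas the manipulations in the proof of Theorem \ref{trapezoidTHM} only touch the block $X_{n-k+2},X_{n-k+3},\dots,X_n$ of $k-1$ consecutive high-index variables. For $n \geq j+k-1$ these two blocks are disjoint, so whenever we split an exponential sum by fixing one of $X_{n-k+2},\dots,X_n$ to $0$ or $1$, the polynomial $F$ is untouched and simply rides along.

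Running exactly the same substitutions as in the proof of Theorem \ref{trapezoidTHM} then produces the two structural identities
\begin{align}
S(\tau_{n,k}+F) &= \sum_{i=1}^{k-1} S(\tau_{n-i,k}+F) + S\!\left(\tau_{n-k+1,k}+F+\sum_{\ell=0}^{k-2}\prod_{m=0}^{\ell} X_{n-k+1-m}\right), \\
S(\tau_{n,k}+F) &= S\!\left(\tau_{n+1,k}+F+\sum_{\ell=0}^{k-2}\prod_{m=0}^{\ell} X_{n+1-m}\right)+S\!\left(\tau_{n,k}+F+\sum_{\ell=0}^{k-2}\prod_{m=0}^{\ell} X_{n-m}\right),
\end{align}
valid for all sufficiently large $n$. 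Setting
\begin{equation*}
a_n := S\!\left(\tau_{n,k}+F+\sum_{\ell=0}^{k-2}\prod_{m=0}^{\ell} X_{n-m}\right),
\end{equation*}
the second identity reads $S(\tau_{n,k}+F)=a_{n+1}+a_n$. Substituting this into the first and simplifying, just as at the end of the proof of Theorem \ref{trapezoidTHM}, yields
\begin{equation*}
a_{n+1}=2a_{n-1}+2a_{n-2}+\cdots+2a_{n-k+1},
\end{equation*}
whose characteristic polynomial is precisely $p_k(X)$. Hence $\{a_n\}$ satisfies the recurrence associated to $p_k(X)$, and since the set of sequences annihilated by a fixed linear recurrence is closed under shifts and sums, so does $\{S(\tau_{n,k}+F)\}=\{a_{n+1}+a_n\}$, which gives both conclusions of the lemma simultaneously.

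The only content beyond the proof of Theorem \ref{trapezoidTHM} is the disjointness check that $F$ does not interfere with the \emph{ON}/\emph{OFF} manipulations, which is immediate from $j<k$; there is no real obstacle here. One minor caveat worth stating in the write-up is that the recursive identities above hold only once $n\geq j+k-1$, but this is a statement about when the recurrence \emph{starts}, not about the recurrence itself, and therefore does not affect the conclusion.
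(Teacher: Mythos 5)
Your proposal is correct and is essentially the argument the paper intends: the published proof of Lemma \ref{trapezoidplusLemma} simply states that it ``follows the same argument'' as Theorem \ref{trapezoidTHM}, and your write-up fleshes out exactly that, with the disjointness observation (that $F$ in $X_1,\dots,X_j$, $j<k$, is untouched by the \emph{ON}/\emph{OFF} manipulations on the high-index variables for large $n$) being the only point needing explicit mention. No gaps.
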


\begin{proof}
 The proof of this result follows the same argument of the proof of Theorem \ref{trapezoidTHM}.   
\end{proof}

Theorem \ref{trapezoidTHM} and Lemma \ref{trapezoidplusLemma} is all that is needed to show that the sequence of exponential sums of $(1,2,\cdots,k)$-rotation symmetric Boolean functions satisfies the linear
recurrence with characteristic polynomial $p_k(X)$.

\begin{theorem}
 The sequence $\{S(R_{2,3,\cdots, k}(n))\}$ satisfies the homogeneous linear recurrence whose characteristic polynomial is given by $p_k(X)$. 
\end{theorem}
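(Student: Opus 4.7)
The plan is to express $S(R_{2,3,\ldots,k}(n))$ as a sum of finitely many exponential sums, each of which falls under Theorem \ref{trapezoidTHM} or Lemma \ref{trapezoidplusLemma} and therefore satisfies the linear recurrence with characteristic polynomial $p_k(X)$. Since the sum of finitely many sequences that satisfy a common homogeneous linear recurrence satisfies that recurrence as well, the result will follow.

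The decomposition exploits the fact that $R_{2,3,\ldots,k}(n) = \tau_{n,k} + W$, where $W$ consists of the $k-1$ wrap-around monomials $X_j X_{j+1}\cdots X_{j+k-1}$ (with indices taken modulo $n$) for $j = n-k+2,\ldots,n$. These are precisely the monomials that couple the right-end variables $X_{n-k+2},\ldots,X_n$ to the left-end variables $X_1,\ldots,X_{k-1}$, and are the only obstruction to writing $R_{2,3,\ldots,k}(n)$ as $\tau_{n,k}$ plus a polynomial in the first $k-1$ variables. I would therefore partition the sum over $\mathbb{F}_2^n$ by a staged conditioning on $X_n, X_{n-1},\ldots,X_{n-k+2}$: first split on $X_n \in \{0,1\}$; if $X_n=0$ keep that branch as a single piece, while if $X_n=1$ further split on $X_{n-1}$; continue along the all-ones branch only, halting each branch the moment a variable is set to $0$ or we reach $X_{n-k+2}$. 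This produces exactly $k$ pieces, indexed either by the position $i \in \{1,\ldots,k-1\}$ of the first zero, or by the all-ones branch.

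For each $i \in \{1,\ldots,k-1\}$, the variable $X_{n-i+1}=0$ annihilates every monomial of $R_{2,3,\ldots,k}(n)$ in which it appears; the $i-1$ surviving wrap-around monomials (those with $j = n-i+2,\ldots,n$) collapse, after the ON variables are set to $1$, to the monomials $X_1 X_2 \cdots X_{k-i+s}$ for $s = 1,\ldots,i-1$, while the non-wrap contribution reduces to $\tau_{n-i,k}$. The corresponding piece is therefore $S(\tau_{n-i,k} + F_i)$ with $F_i$ a polynomial in $X_1,\ldots,X_{k-1}$ alone (vacuously zero when $i=1$), and Lemma \ref{trapezoidplusLemma} (or Theorem \ref{trapezoidTHM} when $i=1$) gives that this sequence satisfies the recurrence. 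In the all-ones branch $X_n = \cdots = X_{n-k+2} = 1$, a direct bookkeeping shows that the restricted function equals $\tau_{n-k+1,k} + F + G$, where $F = \sum_{\ell=1}^{k-1} X_1 X_2 \cdots X_\ell$ again lies in the first $k-1$ variables, and $G$, coming from the non-wrap monomials indexed by $j \in \{n-2k+3,\ldots,n-k+1\}$ whose tails get collapsed to $1$, equals $X_m + X_{m-1}X_m + X_{m-2}X_{m-1}X_m + \cdots + X_{m-k+2}X_{m-k+3}\cdots X_m$ with $m = n-k+1$.

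The main obstacle is the combinatorial verification in this final branch: one has to check carefully that the polynomial $G$ produced at the right end is exactly the right-boundary polynomial appearing in the statement of Lemma \ref{trapezoidplusLemma}, since only that specific combination is guaranteed by the lemma to yield a $p_k(X)$-recursion. Once this identification is established, the lemma applies to the all-ones piece, and summing the exponential sums of all $k$ pieces produces $S(R_{2,3,\ldots,k}(n))$ as a sum of sequences each satisfying $p_k(X)$, whence $\{S(R_{2,3,\ldots,k}(n))\}$ satisfies it too.
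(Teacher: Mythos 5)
Your proposal is correct and follows essentially the same route as the paper: the paper likewise conditions in stages on $X_n, X_{n-1},\ldots,X_{n-k+2}$ (turning each OFF and ON along the all-ones branch), obtains exactly the $k$ pieces you describe --- $S(\tau_{n-i,k}+F_i)$ for $i=1,\ldots,k-1$ plus the all-ones piece $S(\tau_{n-k+1,k}+F+G)$ with $G$ the right-boundary polynomial of Lemma \ref{trapezoidplusLemma} --- and then concludes by linearity from Theorem \ref{trapezoidTHM} and Lemma \ref{trapezoidplusLemma}. The bookkeeping you flag in the final branch does check out and matches the paper's explicit $k=4$ computation and its general formula.
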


\begin{proof}
 This result can also be proved by turning {\it OFF} and {\it ON} several variables.  As before, we provide the proof for the case when $k=4$.  The general case follows the same argument.
 
 To start the argument, turn {\it OFF} and {\it ON} the variable $X_n$ to get
 \begin{equation}
 \label{firstreduction}
  S(R_{2,3,4}(n)) = S(\tau_{n-1,4})+S(\tau_{n-1,4}+X_1X_2X_3+X_1X_2X_{n-1}+X_1X_{n-2}X_{n-1}).
 \end{equation}
Consider the second term of the right hand side of this equation.  Turn $X_{n-1}$ {\it OFF} and {\it ON} to get
\begin{align}
\label{secondterm2}
 S(\tau_{n-1,4}&+X_1X_2X_3+X_1X_2X_{n-1}+X_1X_{n-2}X_{n-1})\\\nonumber
 &=S(\tau_{n-2,4}+X_1X_2X_3)\\\nonumber
 &\,\,\,+S(\tau_{n-2,4}+X_1X_2+X_1X_2X_3+X_1X_{n-2}+X_{n-3}X_{n-2}+X_{n-4}X_{n-3}X_{n-2}).
\end{align}
Again, consider the second term of the right hand side of equation (\ref{secondterm2}). Turn $X_{n-2}$ {\it OFF} and {\it ON} to get
\begin{align}
\label{secondterm3}
 S(\tau_{n-2,4}&+X_1X_2+X_1X_2X_3+X_1X_{n-2}+X_{n-3}X_{n-2}+X_{n-4}X_{n-3}X_{n-2})\\\nonumber
 &=S(\tau_{n-3,4}+X_1X_2+X_1X_2X_3)\\\nonumber
 &\,\,\,+S(\tau_{n-3,4}+X_1+X_1X_2+X_1X_2X_3+X_{n-3}+X_{n-4}X_{n-3}+X_{n-5}X_{n-4}X_{n-3}).
\end{align}
Equations (\ref{firstreduction}), (\ref{secondterm2}) and (\ref{secondterm3}) lead to the equation
 \begin{align}
  S(R_{2,3,4}(n)) &= S(\tau_{n-1,4})+S(\tau_{n-2,4}+X_1X_2X_3)+S(\tau_{n-3,4}+X_1X_2+X_1X_2X_3)\\\nonumber
  & \,\,\,+S(\tau_{n-3,4}+X_1+X_1X_2+X_1X_2X_3+X_{n-3}+X_{n-4}X_{n-3}+X_{n-5}X_{n-4}X_{n-3}).
 \end{align}
Theorem \ref{trapezoidTHM} and Lemma \ref{trapezoidplusLemma} imply that $\{S(\tau_{n-1,4})\}$,  $\{S(\tau_{n-2,4}+X_1X_2X_3)\}$, $\{S(\tau_{n-3,4}+X_1X_2+X_1X_2X_3)\}$ and
$$\{S(\tau_{n-3,4}+X_1+X_1X_2+X_1X_2X_3+X_{n-3}+X_{n-4}X_{n-3}+X_{n-5}X_{n-4}X_{n-3})\}$$
satisfy the linear recurrence whose characteristic polynomial $p_4(X)$.  Since $\{S(R_{2,3,4}(n))\}$ is a linear combination of them, then the result holds when $k=4$.

In general, $S(R_{2,3,\cdots, k}(n))$ can be expressed as
\begin{align}
S(R_{2,3,\cdots, k}(n))&=S(\tau_{n-1,k})+\sum_{m=0}^{k-3} S\left(\tau_{n-2-m,k}+\sum_{j=0}^m \prod_{i=1}^{k-1-j} X_i\right)\\\nonumber
&\,\,\,+S\left(\tau_{n-k+1,k}+\sum_{j=1}^{k-1} \left(\prod_{i=1}^{j} X_i+ \prod_{i=0}^{j-1} X_{n-k+1-i}\right)\right)
\end{align}
Invoke Theorem \ref{trapezoidTHM} and Lemma \ref{trapezoidplusLemma} to get the result.  This concludes the proof.
\end{proof}

The same technique can be applied to find linear recurrences of exponential sums other rotations.  Recall that 
\begin{equation}
R_{j_1,\cdots, j_s}(n)=X_1X_{j_1}\cdots X_{j_s}+X_2X_{j_1+1}\cdots X_{j_s+1}+\cdots+X_nX_{j_1-1}\cdots X_{j_s-1},
\end{equation}
where the indices are taken modulo $n$ and the complete system of residues is $\{1,2,\cdots, n\}$. 
We define the equivalent of the trapezoid Boolean function for $R_{j_1,\cdots, j_s}(n)$ as
\begin{equation}
T_{j_1,\cdots,j_s}(n)=X_1X_{j_1}\cdots X_{j_s}+X_2X_{j_1+1}\cdots X_{j_s+1}+\cdots+X_{n+1-j_s}X_{j_1+n-j_s}\cdots X_{j_{s-1}+n-j_s}X_n.
\end{equation}
For instance, under this notation one has
\begin{equation}
\tau_{n,k}=T_{2,3,\cdots, k}(n).
\end{equation}
It turns out that for $k\geq 4$, the sequences $\{S(R_{2,3,\cdots,k-2,k}(n))\}$ and $\{S(R_{2,3,\cdots,k-2,k+1}(n))\}$ both satisfy the linear recurrence whose characteristic polynomial is
\begin{equation}
q_k(X)=X^{k+1}-2X^{k-1}-2X^{k-2}-\cdots-2X^3-4.
\end{equation}
As just mentioned, this can be proved by playing a game of turning {\it ON} and {\it OFF} some variables.  However, the process becomes somewhat tedious at a very early stage.

For example, recall that Theorem \ref{trapezoidTHM} is an auxiliary result that was used to show that $\{S(R_{2,3,\cdots, k}(n))\}$ satisfies the linear recurrence with characteristic polynomial  $p_k(X)$.  Let us show the equivalent of Theorem \ref{trapezoidTHM} for $\{S(R_{2,4}(n))\}$.  The idea is to show the reader how tedious the process can get.  Recall that the equivalent of the trapezoid Boolean function for this problem is 
\begin{equation}
T_{2,4}(n)=X_1X_2X_4+X_2X_3X_5+\cdots+X_{n-3}X_{n-2}X_n.
\end{equation}
Start with the equation
\begin{eqnarray}
\label{gentrap1}
 S(T_{2,4}(n)+X_{n-1}X_n)&=&S(T_{2,4}(n+1)+X_{n-1}X_n+X_{n+1}+X_{n}X_{n+1})+\\\nonumber
 && S(T_{2,4}(n)+X_{n-2}X_{n-1}+X_{n}+X_{n-1}X_{n}),
\end{eqnarray}
which is a consequence of turning {\it OFF} and {\it ON} the variable $X_{n+1}$. On the other hand, by turning $X_{n}$ {\it OFF} and {\it ON} one gets
\begin{eqnarray}
\label{gentrap2}
S(T_{2,4}(n)+X_{n-1}X_n)&=&S(T_{2,4}(n-1)+ S(T_{2,4}(n-1)+X_{n-1}+X_{n-2}X_{n-3}).
\end{eqnarray}
This gave us two equations for $S(T_{2,4}(n)+X_{n-1}X_n)$.

Consider now the right hand side of (\ref{gentrap2}).  Turn $X_{n-1}$ {\it OFF} and {\it ON} to get 
\begin{eqnarray}
\label{gentrap3}
 S(T_{2,4}(n-1)+X_{n-1}+X_{n-2}X_{n-3})&=& S(T_{2,4}(n-2)+X_{n-2}X_{n-3})-\\\nonumber
 &&S(T_{2,4}(n-2)+X_{n-4}X_{n-3}+X_{n-3}X_{n-2})
\end{eqnarray}
Now turn $X_{n-2}$ {\it OFF} and {\it ON} to get the equation
\begin{eqnarray}\nonumber
\label{gentrap4}
 S(T_{2,4}(n-2)+X_{n-4}X_{n-3}+X_{n-3}X_{n-2})&=&S(T_{2,4}(n-3)+X_{n-4}X_{n-3})+\\
 &&S(T_{2,4}(n-3)+X_{n-5}X_{n-4}+X_{n-3}+X_{n-4}X_{n-3}).
\end{eqnarray}
Combine equations (\ref{gentrap1}), (\ref{gentrap2}), (\ref{gentrap3}), and (\ref{gentrap4}) to get
\begin{eqnarray}\nonumber
\label{gentrap5}
S(T_{2,4}(n)&=&S(T_{2,4}(n+1)+X_{n}X_{n+1})-S(T_{2,4}(n-1)+X_{n-2}X_{n-1})+S(T_{2,4}(n-2)+X_{n-3}X_{n-2})\\
&&+S(T_{2,4}(n-2)+X_{n-4}X_{n-3}+X_{n-2}+X_{n-3}X_{n-2}).
\end{eqnarray}

Now let $a_n=S(T_{2,4}(n)+X_{n-2}X_{n-1}+X_{n}+X_{n-1}X_{n})$.  Observe that equation (\ref{gentrap1}) can be re-written as
\begin{equation}
 S(T_{2,4}(n)+X_{n-1}X_n)=a_{n+1}+a_n.
\end{equation}
This and equation (\ref{gentrap5})  imply
\begin{eqnarray}
 S(T_{2,4}(n))&=&(a_{n+2}+a_{n+1})-(a_n+a_{n-1})+(a_{n-1}+a_{n-2})+a_{n-2}\\\nonumber
 &=&a_{n+2}+a_{n+1}-a_n+2a_{n-2}.
\end{eqnarray}
On the other hand, by switching {\it OFF} and {\it ON} several variables one obtains
\begin{eqnarray}
 S(T_{2,4}(n))&=& S(T_{2,4}(n-1))+S(T_{2,4}(n-2)+X_{n-3}X_{n-2})+S(T_{2,4}(n-3)+X_{n-4}X_{n-3})\\\nonumber
 &&+ S(T_{2,4}(n-3)+X_{n-5}X_{n-4}+X_{n-3}+X_{n-4}X_{n-3}).
\end{eqnarray}
Writing this last equation in terms of $a_n$ one gets
\begin{eqnarray}
 (a_{n+2}+a_{n+1}-a_n+2a_{n-2})&=&(a_{n+1}+a_{n}-a_{n-1}+2a_{n-3})\\\nonumber
 &&+(a_{n-1}+a_{n-2})+(a_{n-2}+a_{n-3})+a_{n-3},
\end{eqnarray}
which simplifies to
\begin{equation}
 a_{n+2}=2a_n+4a_{n-3}.
\end{equation}
The characteristic polynomial for this recurrence is $q_3(X)$.

Other examples on which this elementary method can be used to find explicit formulas for linear recurrences include the sequence
\begin{equation}
\{S(R_{2,3,\cdots,k}(n)+R_{2,3,\cdots,k-1}(n))\},
\end{equation}
which satisfies the linear recurrence with characteristic polynomial 
\begin{equation}
x^k-2x^{k-1}+2,
\end{equation}
the sequence
\begin{equation}
\{S(R_{2,3,\cdots, k-1,k}(n)+R_{2,3,\cdots,k-2,k}(n))\},
\end{equation}
which satisfies the linear recurrence with characteristic polynomial 
\begin{equation}
x^k-2x^{k-1}+2x-2,
\end{equation}
and the sequence 
\begin{equation}
\{S(R_{2, 3,\cdots, k-2,k}(n)+ R_{2, 3, \cdots, k-1}(n)+R_{2, 3, \cdots,k}(n))\},
\end{equation}
which satisfies the linear recurrence with characteristic polynomial 
\begin{equation}
x^k-2(x^{k-2}+x^{k-3}+\cdots+x^2+1).
\end{equation}
However, the process is somewhat tedious to be done by hand.  Automatization seems to be the way to go.  The reader is invited to read Cusick's work \cite{cusickArXiv}, which
includes a {\em Mathematica} code that calculates a linear recurrences for the weights of a given rotation.


\section{Linear recurrences over $\mathbb{F}_q$}
\label{anyGalois}

In this section we show that exponential sums of rotation functions over Galois fields satisfy linear recurrence.  This is a generalization of Cusick's result.

Consider the Galois field $\mathbb{F}_q = \{0,\alpha_1,\cdots,\alpha_{q-1}\}$ where $q=p^r$ with $p$ prime and $r\geq 1$.  Recall that the exponential sum 
of a function $F:\mathbb{F}_q^n \to \mathbb{F}_q$ is given by
\begin{equation}
 S_{\mathbb{F}_q}(F)=\sum_{{\bf x}\in \mathbb{F}^n_q} e^{\frac{2\pi i}{p} \text{Tr}_{\mathbb{F}_q/\mathbb{F}_p}(F({\bf x}))},
\end{equation}
where $\text{Tr}_{\mathbb{F}_q/\mathbb{F}_p}$ represents the field trace function from $\mathbb{F}_q$ to $\mathbb{F}_p$.
The same technique used for exponential sums of Boolean functions can be used in general.  However, instead of having two options for the ``switch", we now have $q$ of them.  Let $X$ be a variable which takes values on $\mathbb{F}_q$.  As before, we say that the variable $X$ can be turned {\it OFF} or {\it ON}, however, this time the term ``turn {\it OFF}" means that $X$ assumes the value 0, while the term ``turn {\it ON}" means that $X$ assumes all values in $\mathbb{F}_q$ that are different from zero.  Think of this situation as a light switch on which you have the option to turn  {\it OFF} the light and the option to turn it {\it ON} to one of $q-1$ colors.

We consider first sequences exponential sums of trapezoid functions.  As in the case over $\mathbb{F}_2$, they satisfy linear recurrences with integer coefficients over any Galois field 
$\mathbb{F}_q$.   We start with the following lemma, which is interesting in its own right.

\begin{lemma}
\label{generallemma}
Let $k, n$ and $j$ be integers with $k>2$, $1\leq j<k$ and $n\geq k$.   Then,
\begin{equation}
S_{\mathbb{F}_q}\left(T_{2,3,\cdots,k}(n)+\sum_{s=1}^j\beta_s\prod_{l=0}^{k-s-1}X_{n-l}\right)=S_{\mathbb{F}_q}\left(T_{2,3,\cdots,k}(n)+\sum_{s=1}^j\prod_{l=0}^{k-s-1}X_{n-l}\right)
\end{equation}
for any choice of $\beta_{s} \in \mathbb{F}_q^\times$.
\end{lemma}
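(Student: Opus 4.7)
My plan is to reduce the identity to a straightforward diagonal change of variables. For any $c_1,\ldots,c_n \in \mathbb{F}_q^\times$, the map $(x_1,\ldots,x_n) \mapsto (c_1 x_1,\ldots,c_n x_n)$ is a bijection of $\mathbb{F}_q^n$, so for any polynomial $F$ one has
$$S_{\mathbb{F}_q}(F(X_1,\ldots,X_n)) = S_{\mathbb{F}_q}(F(c_1 X_1,\ldots,c_n X_n)).$$
Writing $F_\beta = T_{2,3,\ldots,k}(n) + \sum_{s=1}^j \beta_s \prod_{l=0}^{k-s-1} X_{n-l}$, it therefore suffices to exhibit a choice of $c_1,\ldots,c_n \in \mathbb{F}_q^\times$ for which $F_\beta(c_1 X_1,\ldots,c_n X_n)$ agrees monomial by monomial with $F_{\mathbf 1} = T_{2,3,\ldots,k}(n) + \sum_{s=1}^j \prod_{l=0}^{k-s-1} X_{n-l}$.

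After applying the substitution, the $j$-th summand of the trapezoid acquires the coefficient $c_j c_{j+1}\cdots c_{j+k-1}$, and the $s$-th extra monomial acquires $\beta_s \cdot c_n c_{n-1}\cdots c_{n-k+s+1}$. I would first impose the trapezoid-invariance conditions $c_j c_{j+1}\cdots c_{j+k-1} = 1$ for $j=1,\ldots,n-k+1$; since consecutive such equations share $k-1$ entries, this system is equivalent to demanding that $(c_i)$ be $k$-periodic with $c_1 c_2\cdots c_k = 1$. Granting this, the product of any $k$ consecutive $c_i$'s equals $1$, so using periodicity to shift $c_{n-k+1},\ldots,c_{n-k+s}$ to $c_{n+1},\ldots,c_{n+s}$ gives $c_{n-k+s+1}\cdots c_n = (c_{n+1}\cdots c_{n+s})^{-1}$. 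The remaining requirement $\beta_s \cdot c_n c_{n-1}\cdots c_{n-k+s+1} = 1$ then becomes $c_{n+1}c_{n+2}\cdots c_{n+s} = \beta_s$ for $s=1,\ldots,j$, which telescopes to
$$c_{n+s} = \beta_s \beta_{s-1}^{-1} \qquad (s=1,\ldots,j,\ \beta_0 := 1),$$
a well-defined element of $\mathbb{F}_q^\times$ because every $\beta_s \in \mathbb{F}_q^\times$.

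The main obstacle will be verifying that the two sets of constraints are simultaneously solvable, and this is where the hypothesis $j<k$ is essential. Because $j<k$, the indices $n+1,n+2,\ldots,n+j$ are pairwise distinct modulo $k$, so the prescription above fixes exactly $j$ of the $k$ periodic values among $c_1,\ldots,c_k$ without conflict. The remaining $k-j \ge 1$ periodic values can then be assigned arbitrarily in $\mathbb{F}_q^\times$ subject to the single closure constraint $c_1 c_2\cdots c_k = 1$: since at least one of these values is still free, I may set all but one of them to $1$ and use the last to close the product. The resulting diagonal substitution is a bijection of $\mathbb{F}_q^n$ that identifies $F_\beta$ with $F_{\mathbf 1}$ as polynomials in $X_1,\ldots,X_n$, and the equality of exponential sums follows.
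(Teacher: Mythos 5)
Your argument is correct, and it takes a genuinely different --- and more direct --- route than the paper's. The paper proves the lemma by induction on $n$: the diagonal rescaling $X_i \mapsto c_i X_i$ appears only in the base case $n=k$, and the inductive step expands the sum over the last variable and invokes the character-sum identity $\sum_{\alpha\in\mathbb{F}_q^\times}e^{\frac{2\pi i}{p}\text{Tr}_{\mathbb{F}_q/\mathbb{F}_p}(\alpha\beta)}=-1$ to show that the resulting expression does not depend on the $\beta_s$. You instead observe that the rescaling can be performed globally for every $n\ge k$: the trapezoid constraints force $(c_i)$ to be $k$-periodic with unit product over a period, and your periodic-extension/telescoping computation correctly converts the conditions $\beta_s\,c_{n-k+s+1}\cdots c_n=1$ into prescribing the $j$ residue-class values $c_{n+s}=\beta_s\beta_{s-1}^{-1}$, leaving $k-j\ge 1$ classes free to close the product $c_1\cdots c_k=1$ --- which is exactly where the hypothesis $j<k$ enters (I checked small cases, e.g.\ $k=3$, $n=4$, $j=2$, and the construction does identify $F_{\boldsymbol\beta}$ with $F_{\mathbf 1}$ monomial by monomial). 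This eliminates both the induction and the character sums, and it makes the role of $j<k$ more transparent than in the paper; what the paper's inductive route buys is that it rehearses the one-variable expansion identities on which the subsequent trapezoid recurrence theorem is built. Two cosmetic points only: you reuse $j$ both as the number of extra monomials and as the trapezoid summation index, and the asserted \emph{equivalence} of the trapezoid system with $k$-periodicity is only needed in the direction ``periodicity implies the constraints,'' which is all your existence argument requires.
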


\begin{proof}
The proof is by induction on $n$.  Suppose first that $n=k$.  Observe that
\begin{eqnarray}\nonumber
\label{basecase}
 T_{2,3,\cdots,k}(k)+\sum_{s=1}^j\beta_s\prod_{l=0}^{k-s-1}X_{k-l}&=& X_1X_2\cdots X_k+\beta_j X_{j+1}X_{j+2}\cdots X_k+\beta_{j-1} X_{j}X_{j+1}\cdots X_k\\
 &&+\cdots+ \beta_2 X_3 X_4\cdots X_k+  \beta_1 X_2 X_3\cdots X_k.
\end{eqnarray}
Consider the right hand side of (\ref{basecase}).  If $1\leq j\leq k-2$, then make the changes of variables
\begin{eqnarray*}
 X_t &=& Y_t, \,\,\, \text{ for }j+2\leq t \leq k \\
 X_{j+1}&=& \beta_j^{-1}Y_{j+1}\\
 X_t&=& \beta_{t-1}^{-1}\beta_t Y_t, \,\,\, \text{ for }2\leq t \leq j\\ 
 X_1&=& \beta_1 Y_1.
\end{eqnarray*}
On the other hand, if $j=k-1$, then make the change of variables
\begin{eqnarray*}
 X_{k}&=& \beta_{k-1}^{-1}Y_{k}\\
 X_t&=& \beta_{t-1}^{-1}\beta_t Y_t, \,\,\, \text{ for }2\leq t \leq k-1\\ 
 X_1&=& \beta_1 Y_1.
\end{eqnarray*}
This transforms (\ref{basecase}) into 
\begin{equation}
  Y_1Y_2\cdots Y_k +\sum_{s=1}^j\prod_{l=0}^{k-s-1}Y_{k-l}.
\end{equation}
Therefore, 
\begin{equation}
 S_{\mathbb{F}_q}\left(T_{2,3,\cdots,k}(k)+\sum_{s=1}^j\beta_s\prod_{l=0}^{k-s-1}X_{k-l}\right)=S_{\mathbb{F}_q}\left(T_{2,3,\cdots,k}(k)+\sum_{s=1}^j\prod_{l=0}^{k-s-1}X_{k-l}\right).
\end{equation}
This concludes the base case.  

Suppose now that for some $n\geq k$ we have 
\begin{equation}
 S_{\mathbb{F}_q}\left(T_{2,3,\cdots,k}(n)+\sum_{s=1}^j\beta_s\prod_{l=0}^{k-s-1}X_{n-l}\right)=S_{\mathbb{F}_q}\left(T_{2,3,\cdots,k}(n)+\sum_{s=1}^j\prod_{l=0}^{k-s-1}X_{n-l}\right).
\end{equation}
Consider 
\begin{equation}
 S_{\mathbb{F}_q}\left(T_{2,3,\cdots,k}(n+1)+\sum_{s=1}^j\beta_s\prod_{l=0}^{k-s-1}X_{n+1-l}\right).
\end{equation}
Suppose first that $1\leq j \leq k-2$.  Letting $X_{n+1}$ run over every element of the field leads to 
\begin{eqnarray}\nonumber
  S_{\mathbb{F}_q}\left(T_{2,3,\cdots,k}(n+1)+\sum_{s=1}^j\beta_s\prod_{l=0}^{k-s-1}X_{n+1-l}\right)&=& S_{\mathbb{F}_q}\left(T_{2,3,\cdots,k}(n)\right)\\
  &&+\sum_{\alpha\in\mathbb{F}_q^{\times}}  S_{\mathbb{F}_q}\left(T_{2,3,\cdots,k}(n)+\sum_{s=1}^{j+1}\gamma_s(\alpha)\prod_{l=0}^{k-s-1}X_{n-l}\right),
\end{eqnarray}
where $\gamma_1(\alpha)=\alpha$ and $\gamma_s(\alpha) = \alpha \beta_{s-1}$.  By induction
\begin{equation}
 S_{\mathbb{F}_q}\left(T_{2,3,\cdots,k}(n)+\sum_{s=1}^{j+1}\gamma_s(\alpha)\prod_{l=0}^{k-s-1}X_{n-l}\right)=S_{\mathbb{F}_q}\left(T_{2,3,\cdots,k}(n)+\sum_{s=1}^{j+1}\prod_{l=0}^{k-s-1}X_{n-l}\right).
\end{equation}
Therefore,
\begin{eqnarray}\nonumber
\label{indstep1}
  S_{\mathbb{F}_q}\left(T_{2,3,\cdots,k}(n+1)+\sum_{s=1}^j\beta_s\prod_{l=0}^{k-s-1}X_{n+1-l}\right)&=& S_{\mathbb{F}_q}\left(T_{2,3,\cdots,k}(n)\right)\\
  &&+\sum_{\alpha\in\mathbb{F}_q^{\times}}  S_{\mathbb{F}_q}\left(T_{2,3,\cdots,k}(n)+\sum_{s=1}^{j+1}\prod_{l=0}^{k-s-1}X_{n-l}\right).
\end{eqnarray}
However, (\ref{indstep1}) does not depend on the choice of the $\beta_t$'s. It follows that
\begin{eqnarray}\nonumber
  S_{\mathbb{F}_q}\left(T_{2,3,\cdots,k}(n+1)+\sum_{s=1}^j\beta_s\prod_{l=0}^{k-s-1}X_{n+1-l}\right)&=& S_{\mathbb{F}_q}\left(T_{2,3,\cdots,k}(n+1)+\sum_{s=1}^j\prod_{l=0}^{k-s-1}X_{n+1-l}\right)
\end{eqnarray}
is true for $1\leq j\leq k-2$.

Consider now the case $j=k-1$.  Again,  letting $X_{n+1}$ run over every element of the field leads to 
\begin{align}\nonumber
  S_{\mathbb{F}_q}(T_{2,3,\cdots,k}(n+1)&+\sum_{s=1}^{k-1}\beta_s\prod_{l=0}^{k-s-1}X_{n+1-l})= S_{\mathbb{F}_q}\left(T_{2,3,\cdots,k}(n)\right)\\
  &+\sum_{\alpha\in\mathbb{F}_q^{\times}} e^{\frac{2\pi i}{p} \text{Tr}_{\mathbb{F}_q/\mathbb{F}_p}(\alpha \beta_{k-1})} S_{\mathbb{F}_q}\left(T_{2,3,\cdots,k}(n)+\sum_{s=1}^{k-1}\gamma_s(\alpha)\prod_{l=0}^{k-s-1}X_{n-l}\right),
\end{align}
where $\gamma_1(\alpha)=\alpha$ and $\gamma_s(\alpha) = \alpha \beta_{s-1}$.  However, by induction
\begin{equation}
 S_{\mathbb{F}_q}\left(T_{2,3,\cdots,k}(n)+\sum_{s=1}^{k-1}\gamma_s(\alpha)\prod_{l=0}^{k-s-1}X_{n-l}\right)=S_{\mathbb{F}_q}\left(T_{2,3,\cdots,k}(n)+\sum_{s=1}^{j+1}\prod_{l=0}^{k-s-1}X_{n-l}\right).
\end{equation}
Since
\begin{equation}
 \sum_{\alpha \in \mathbb{F}_q^\times}e^{\frac{2\pi i}{p} \text{Tr}_{\mathbb{F}_q/\mathbb{F}_p}(\alpha \beta_{k-1})}=-1,
\end{equation}
then it follows that
\begin{eqnarray}\nonumber
\label{indstep2}
  S_{\mathbb{F}_q}\left(T_{2,3,\cdots,k}(n+1)+\sum_{s=1}^{k-1}\beta_s\prod_{l=0}^{k-s-1}X_{n+1-l}\right)&=& S_{\mathbb{F}_q}\left(T_{2,3,\cdots,k}(n)\right)\\
  &&-\sum_{\alpha\in\mathbb{F}_q^{\times}}  S_{\mathbb{F}_q}\left(T_{2,3,\cdots,k}(n)+\sum_{s=1}^{k-1}\prod_{l=0}^{k-s-1}X_{n-l}\right).
\end{eqnarray}
Since (\ref{indstep1}) does not depend on the choice of the $\beta_t$'s, then it follows that
\begin{eqnarray}\nonumber
  S_{\mathbb{F}_q}\left(T_{2,3,\cdots,k}(n+1)+\sum_{s=1}^{k-1}\beta_s\prod_{l=0}^{k-s-1}X_{n+1-l}\right)&=& S_{\mathbb{F}_q}\left(T_{2,3,\cdots,k}(n+1)+\sum_{s=1}^{k-1}\prod_{l=0}^{k-s-1}X_{n+1-l}\right)
\end{eqnarray}
is true.  This completes the induction and the proof.
\end{proof}

Next is the recurrence for exponential sums of trapezoid functions over any Galois field.

\begin{theorem}
\label{thmtrapgen}
Let $k\geq 2$ be an integer and $q=p^r$ with $p$ prime. The sequence $\{S_{\mathbb{F}_q}(T_{2,3,\cdots,k}(n))\}_{n=k}^\infty$ satisfies a homogeneous linear recurrence with integer coefficients whose characteristic polynomial is given by 
 \begin{equation}
 \label{charpolygeneral}
  Q_{T,k,\mathbb{F}_q}(X)=X^k-q \sum _{l=0}^{k-2} (q-1)^l X^{k-2-l}.
 \end{equation}
 In particular, when $q=2$ we recover Theorem \ref{trapezoidTHM}.
\end{theorem}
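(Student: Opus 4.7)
The plan is to generalize the $\mathbb{F}_2$ argument of Theorem \ref{trapezoidTHM} by replacing the two-state OFF/ON switch with a $q$-state switch: each variable $X_j$ is either set to $0$ (OFF) or allowed to range over the $q-1$ nonzero values of $\mathbb{F}_q$ (ON). The workhorse is Lemma \ref{generallemma}: it guarantees that whenever we expand a variable, the specific nonzero values appearing as coefficients on the trailing monomials do not affect the resulting exponential sum, so the sum over $\mathbb{F}_q^\times$ in the ON case collapses to a clean multiplicative factor.

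Step 1 is a \emph{downward} expansion. Starting from $S_{\mathbb{F}_q}(T_{2,\ldots,k}(n))$, I would iteratively expand the variables $X_n, X_{n-1}, \ldots, X_{n-k+2}$ one at a time. Each OFF substitution kills the rightmost trapezoid monomial and contributes $S_{\mathbb{F}_q}(T_{2,\ldots,k}(n-j))$; each ON substitution $X=\alpha \in \mathbb{F}_q^\times$ appends a new trailing monomial with coefficient $\alpha$. After each expansion the resulting polynomial has precisely the shape covered by Lemma \ref{generallemma}, so the sum over $\alpha$ contributes a factor of $q-1$. Tracking powers of $q-1$ through the $k-1$ expansions yields
\begin{equation*}
S_{\mathbb{F}_q}(T_{2,\ldots,k}(n)) = \sum_{i=1}^{k-1} (q-1)^{i-1}\, S_{\mathbb{F}_q}(T_{2,\ldots,k}(n-i)) + (q-1)^{k-1}\, a_{n-k+1},
\end{equation*}
where
\begin{equation*}
a_m := S_{\mathbb{F}_q}\!\left(T_{2,\ldots,k}(m) + \sum_{s=1}^{k-1}\prod_{l=0}^{k-s-1}X_{m-l}\right).
\end{equation*}

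Step 2 is one additional expansion, this time of $X_m$ inside $a_m$. Here we land in the $j=k-1$ instance of Lemma \ref{generallemma}: the ON substitution splits off an additive constant $\alpha$ and leaves the same exponential sum $a_{m-1}$ for every $\alpha \neq 0$, weighted by the character $e^{2\pi i/p\,\mathrm{Tr}(\alpha)}$. Summing over $\alpha \in \mathbb{F}_q^\times$ uses $\sum_{\alpha \in \mathbb{F}_q^\times} e^{2\pi i/p\,\mathrm{Tr}(\alpha)} = -1$, yielding
\begin{equation*}
a_m = S_{\mathbb{F}_q}(T_{2,\ldots,k}(m-1)) - a_{m-1},
\end{equation*}
equivalently $S_{\mathbb{F}_q}(T_{2,\ldots,k}(n)) = a_{n+1} + a_n$.

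To close the argument, I substitute this identity into the expansion from Step 1 and use $(q-1)^j + (q-1)^{j-1} = q(q-1)^{j-1}$ to collapse the resulting sum; the $a_n$ terms cancel, leaving
\begin{equation*}
a_{n+1} = q\sum_{j=1}^{k-1}(q-1)^{j-1}\, a_{n-j},
\end{equation*}
whose characteristic polynomial is exactly $Q_{T,k,\mathbb{F}_q}(X)$. Because $S_{\mathbb{F}_q}(T_{2,\ldots,k}(n)) = a_{n+1}+a_n$ is a fixed integer-linear combination of shifts of $\{a_n\}$, it satisfies the same recurrence. The main obstacle I foresee is the bookkeeping in Step 1: after $j$ expansions one must verify that the accumulated trailing monomials land exactly in the form $T_{2,\ldots,k}(n-j) + \sum_{s=1}^{j}\beta_s \prod_{l=0}^{k-s-1}X_{(n-j)-l}$ required by Lemma \ref{generallemma}, so that the absorption of the nonzero coefficients (and hence the reduction to a pure power of $q-1$) is legitimate. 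Once this pattern is confirmed by induction on $j$, the rest is mechanical.
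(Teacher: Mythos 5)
Your proposal is correct and follows essentially the same route as the paper's proof: the same downward expansion of $X_n,\ldots,X_{n-k+2}$ with Lemma \ref{generallemma} collapsing each ON-sum to a factor of $q-1$, the same auxiliary sequence $a_n$ with the identity $S_{\mathbb{F}_q}(T_{2,\ldots,k}(n))=a_{n+1}+a_n$ obtained from the character sum $\sum_{\alpha\in\mathbb{F}_q^\times}e^{2\pi i\,\mathrm{Tr}(\alpha)/p}=-1$, and the same telescoping substitution yielding $a_{n+1}=q\sum_{j=1}^{k-1}(q-1)^{j-1}a_{n-j}$. The bookkeeping concern you flag at the end is real but routine, and is exactly the induction that Lemma \ref{generallemma} is designed to absorb.
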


\begin{proof}
We present the proof for $k>2$.  The case $k=2$ can be proved using similar techniques.
 Start by turning $X_n$ {\it OFF} and {\it ON}, that is, by letting $X_n$ assume all its possible values.  This produces the identity
 \begin{eqnarray}
 \label{firstidgeneral}
  S_{\mathbb{F}_q}(T_{2,3,\cdots,k}(n))=S_{\mathbb{F}_q}(T_{2,3,\cdots,k}(n-1))+ \sum_{\beta \in \mathbb{F}_q^{\times}}S_{\mathbb{F}_q}\left(T_{2,3,\cdots,k}(n-1)+\beta \prod_{j=1}^{k-1} X_{n-j}\right)
 \end{eqnarray}
However, Lemma \ref{generallemma} implies
\begin{equation}
 S_{\mathbb{F}_q}\left(T_{2,3,\cdots,k}(n-1)+\beta \prod_{j=1}^{k-1} X_{n-j}\right)=S_{\mathbb{F}_q}\left(T_{2,3,\cdots,k}(n-1)+ \prod_{j=1}^{k-1} X_{n-j}\right)
\end{equation}
for every $\beta \in \mathbb{F}_q^{\times}$.  Therefore, (\ref{firstidgeneral}) reduces to 
 \begin{eqnarray}
 \label{secondidgeneral}
  S_{\mathbb{F}_q}(T_{2,3,\cdots,k}(n))=S_{\mathbb{F}_q}(T_{2,3,\cdots,k}(n-1))+ (q-1)S_{\mathbb{F}_q}\left(T_{2,3,\cdots,k}(n-1)+ \prod_{j=1}^{k-1} X_{n-j}\right)
 \end{eqnarray}
Consider now $S_{\mathbb{F}_q}\left(T_{2,3,\cdots,k}(n-1)+ \prod_{j=1}^{k-1} X_{n-j}\right)$. Let $X_{n-1}$ assume all its possible values and use the same argument as before to get
\begin{eqnarray}\nonumber
  S_{\mathbb{F}_q}\left(T_{2,3,\cdots,k}(n-1)+ \prod_{j=1}^{k-1} X_{n-j}\right)&=& S_{\mathbb{F}_p}\left(T_{2,3,\cdots,k}(n-2)\right)\\
 &&+ (q-1) S_{\mathbb{F}_q}\left(T_{2,3,\cdots,k}(n-2)+\prod_{j=1}^{k-2} X_{n-1-j}+\prod_{j=1}^{k-1} X_{n-1-j}\right)
\end{eqnarray}
Thus, (\ref{secondidgeneral}) reduces to
\begin{eqnarray}\nonumber
 S_{\mathbb{F}_q}(T_{2,3,\cdots,k}(n))&=&S_{\mathbb{F}_q}(T_{2,3,\cdots,k}(n-1))+(q-1)S_{\mathbb{F}_q}\left(T_{2,3,\cdots,k}(n-2)\right)\\
 &&+ (q-1)^2 S_{\mathbb{F}_q}\left(T_{2,3,\cdots,k}(n-2)+\prod_{j=1}^{k-2} X_{n-1-j}+\prod_{j=1}^{k-1} X_{n-1-j}\right).
\end{eqnarray}
Continue in this manner to get the following equation
\begin{eqnarray}
 \label{finalidgeneral}
 S_{\mathbb{F}_q}(T_{2,3,\cdots,k}(n))&=& \sum_{l=1}^{k-1}(q-1)^{l-1}S_{\mathbb{F}_q}T_{2,3,\cdots,k}(n-l))\\\nonumber
 &&+(q-1)^{k-1} S_{\mathbb{F}_q}\left(T_{2,3,\cdots,k}(n-k+1)+\sum_{j=0}^{k-2}\prod_{l=0}^j X_{n-k+1-l}\right).
\end{eqnarray}

On the other hand, let $X_{n+1}$ assume all its possible values and use Lemma \ref{generallemma} to get the equation
\begin{eqnarray}\nonumber
\label{axuliaryfinalgen}
S_{\mathbb{F}_q}\left(T_{2,3,\cdots,k}(n+1)+\sum_{j=0}^{k-2}\prod_{i=0}^j X_{n+1-l}\right)&=& S_{\mathbb{F}_q}(T_{2,3,\cdots,k}(n))\\
&&+e^{\frac{2\pi i}{p}\text{Tr}_{\mathbb{F}_q/\mathbb{F}_p}(1)}S_{\mathbb{F}_q}\left(T_{2,3,\cdots,k}(n)+\sum_{j=0}^{k-2}\prod_{i=0}^j X_{n-l}\right)\\\nonumber
&&+e^{\frac{2\pi i}{p}\text{Tr}_{\mathbb{F}_q/\mathbb{F}_p}(2)}S_{\mathbb{F}_q}\left(T_{2,3,\cdots,k}(n)+\sum_{j=0}^{k-2}\prod_{i=0}^j X_{n-l}\right)\\\nonumber
&&+e^{\frac{2\pi i}{p}\text{Tr}_{\mathbb{F}_q/\mathbb{F}_p}(3)}S_{\mathbb{F}_q}\left(T_{2,3,\cdots,k}(n)+\sum_{j=0}^{k-2}\prod_{i=0}^j X_{n-l}\right)\\\nonumber
&&\vdots\\\nonumber
&&+e^{\frac{2\pi i}{p}\text{Tr}_{\mathbb{F}_q/\mathbb{F}_p}(\alpha_{p-1})}S_{\mathbb{F}_q}\left(T_{2,3,\cdots,k}(n)+\sum_{j=0}^{k-2}\prod_{i=0}^j X_{n-l}\right).
\end{eqnarray}
Use the well-known formula
\begin{equation}
\sum_{\beta\in\mathbb{F}_q^{\times}} e^{\frac{2\pi i}{p}\text{Tr}_{\mathbb{F}_q/\mathbb{F}_p}(\beta)}=-1.
\end{equation}  
to reduce (\ref{axuliaryfinalgen}) to 
\begin{eqnarray}
 S_{\mathbb{F}_q}\left(T_{2,3,\cdots,k}(n+1)+\sum_{j=0}^{k-2}\prod_{i=0}^j X_{n+1-l}\right)&=& S_{\mathbb{F}_q}(T_{2,3,\cdots,k}(n))\\\nonumber
 &=& - S_{\mathbb{F}_q}\left(T_{2,3,\cdots,k}(n)+\sum_{j=0}^{k-2}\prod_{i=0}^j X_{n-l}\right).
\end{eqnarray}
This last equation is equivalent to 
\begin{eqnarray}
\label{axuliaryfinalgen2}
 S_{\mathbb{F}_q}(T_{2,3,\cdots,k}(n))&=& S_{\mathbb{F}_q}\left(T_{2,3,\cdots,k}(n+1)+\sum_{j=0}^{k-2}\prod_{i=0}^j X_{n+1-l}\right)\\\nonumber
 && +S_{\mathbb{F}_q}\left(T_{2,3,\cdots,k}(n)+\sum_{j=0}^{k-2}\prod_{i=0}^j X_{n-l}\right).
\end{eqnarray}

Let $a_n = S_{\mathbb{F}_q}\left(T_{2,3,\cdots,k}(n)+\sum_{j=0}^{k-2}\prod_{i=0}^j X_{n-l}\right)$. Then,
\begin{equation}
 S_{\mathbb{F}_q}(T_{2,3,\cdots,k}(n))=a_{n+1}+a_n
\end{equation}
and equation (\ref{finalidgeneral}) is now
\begin{eqnarray}
 (a_{n+1}+a_n)&=&\sum_{l=1}^{k-1}(q-1)^{l-1}(a_{n+1-l}+a_{n-l})+(q-1)^{k-1} a_{n-k+1}.
\end{eqnarray}
The last equation reduces to
\begin{equation}
 a_{n+1}=\sum_{l=0}^{k-2} q(q-1)^{l}a_{n-1-l}
\end{equation}
This concludes the proof.
\end{proof}

The polynomial $Q_{T,k,\mathbb{F}_q}(X)$ is quite interesting. In particular, it seems to be irreducible for $k > 2$ and every $q=p^r$ with $p$ prime.  The irreducibility of $Q_{T,k,\mathbb{F}_q}(X)$ when $\gcd(k,r)=1$ 
is a consequence of Eisenstein-Dumas criterion.

\begin{theorem}[Eisenstein-Dumas criterion]
Let $f(x)=a_n x^n+a_{n-1}x^{n-1}+\cdots+a_1 x+a_0 \in \mathbb{Z}[x]$ be a polynomial. Let $p$ be a prime. Denote the $p$-adic valuation of an integer $m$ by $\nu_p(m)$ (with $\nu_p(0)=+\infty$).  Suppose that
\begin{enumerate}
 \item $\nu_p(a_n)=0$,
 \item $\nu_p(a_{n-i})/i>\nu_p(a_0)/n$ for $1\leq i \leq n-1$, and
 \item $\gcd(\nu_p(a_0),n)=1$.
\end{enumerate}
Then, $f(x)$ is irreducible over $\mathbb{Q}$.
\end{theorem}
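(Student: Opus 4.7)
The plan is to prove this via Newton polygons over $\mathbb{Q}_p$. Define the Newton polygon $N(f)$ of $f$ as the lower convex hull of the set $\{(i,\nu_p(a_i)) : 0 \leq i \leq n,\ a_i \neq 0\}$ in $\mathbb{R}^2$. The central tool I will invoke is the slope-additivity lemma: if $f = g\,h$ in $\mathbb{Q}_p[x]$, then $N(f)$ is obtained by concatenating the edges of $N(g)$ and $N(h)$ in order of increasing slope; equivalently, the multiset of slopes of $N(f)$, each weighted by the horizontal length of its edge, is the disjoint union of the corresponding multisets for $N(g)$ and $N(h)$.

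Given this tool, I would first read off $N(f)$ from the hypotheses. Condition (1) places the rightmost vertex of $N(f)$ at $(n,0)$, while condition (2) rewrites as
\[
\nu_p(a_{n-i}) > \frac{i}{n}\,\nu_p(a_0) \qquad (1 \leq i \leq n-1),
\]
which says that every intermediate point $(n-i,\nu_p(a_{n-i}))$ lies strictly above the segment joining $(0,\nu_p(a_0))$ and $(n,0)$. Hence $N(f)$ consists of a single edge of slope $-\nu_p(a_0)/n$.

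Now suppose, for contradiction, that $f = g\,h$ is a nontrivial factorization in $\mathbb{Z}[x]$; by Gauss's lemma this is equivalent to a nontrivial factorization in $\mathbb{Q}[x]$, which we may then view in $\mathbb{Q}_p[x]$. By the slope-additivity lemma, every edge of $N(g)$ and $N(h)$ has slope $-\nu_p(a_0)/n$. Writing this slope in lowest terms as $-A/B$, hypothesis (3) forces $B = n$. Since the endpoints of edges of $N(g)$ and $N(h)$ have integer coordinates, an edge of slope $-A/n$ with $\gcd(A,n)=1$ must have horizontal length a positive multiple of $n$. Therefore $\deg(g)$ and $\deg(h)$ are each multiples of $n$, and since they sum to $n$ one of them must be $0$, contradicting nontriviality. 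Thus $f$ is irreducible over $\mathbb{Q}$.

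The main obstacle is the slope-additivity lemma itself, which is standard but does require setup. The cleanest route is to pass to $\overline{\mathbb{Q}_p}$, show that the slopes of $N(f)$ are precisely the negatives of the $p$-adic valuations of the roots of $f$ (counted with multiplicity), and then note that this identification is automatically multiplicative under factorization since the roots of $g\,h$ are the union of those of $g$ and $h$. Once that lemma is in hand, the argument above is purely combinatorial and immediate.
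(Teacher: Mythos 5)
Your proof is correct. Note, however, that the paper does not prove this statement at all: it records the Eisenstein--Dumas criterion as a known classical result and immediately applies it to show that $Q_{T,k,\mathbb{F}_q}(X)$ is irreducible when $\gcd(k,r)=1$, so there is no internal argument to compare yours against. What you have written is the standard Newton-polygon proof, and every step checks out: hypotheses (1) and (2) do exactly pin the polygon down to the single segment from $(0,\nu_p(a_0))$ to $(n,0)$ (the segment's ordinate at abscissa $n-i$ is $i\,\nu_p(a_0)/n$, so (2) is precisely the statement that all intermediate points lie strictly above it), slope additivity transfers that slope to every edge of $N(g)$ and $N(h)$, and since vertices of Newton polygons are lattice points, hypothesis (3) forces each factor's degree to be a multiple of $n$. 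You are also right to identify the slope-additivity lemma as the only real content requiring proof, and the route you indicate --- identifying the slopes with the negatives of the $p$-adic valuations of the roots in $\overline{\mathbb{Q}_p}$, which is manifestly multiplicative --- is a legitimate way to establish it. Two cosmetic remarks: the appeal to Gauss's lemma is not really needed, since a nontrivial factorization over $\mathbb{Q}$ already gives one over $\mathbb{Q}_p$ directly; and in the degenerate case $\nu_p(a_0)=0$, hypothesis (3) forces $n=1$, which your ``lowest terms'' bookkeeping handles consistently but is worth noting as the reason the argument does not silently assume $\nu_p(a_0)>0$.
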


\begin{proposition}
 Let $q=p^r$ with $p$ prime.  Suppose that $\gcd(k,r)=1$.  Then, the polynomial
 \begin{equation}
  Q_{T,k,\mathbb{F}_q}(X)=X^k-q \sum _{l=0}^{k-2} (q-1)^l X^{k-2-l}
 \end{equation}
is irreducible over $\mathbb{Q}$.
\end{proposition}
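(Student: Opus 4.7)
The plan is to verify the three hypotheses of the Eisenstein--Dumas criterion with respect to the prime $p$ (the one with $q = p^r$). First I would write out the coefficients of
\[
 Q_{T,k,\mathbb{F}_q}(X) = X^k - q X^{k-2} - q(q-1) X^{k-3} - q(q-1)^2 X^{k-4} - \cdots - q(q-1)^{k-2}
\]
explicitly: the leading coefficient $a_k=1$, the subleading coefficient $a_{k-1}=0$, and for $j=2,3,\ldots,k$ the coefficient of $X^{k-j}$ is $a_{k-j} = -q(q-1)^{j-2}$. In particular $a_0 = -q(q-1)^{k-2}$.

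Next I would compute the $p$-adic valuations. Since $q = p^r$ we have $\nu_p(q) = r$, and since $q \equiv 0 \pmod{p}$ we have $q-1 \equiv -1 \pmod{p}$, so $\nu_p(q-1) = 0$. Therefore $\nu_p(a_k) = 0$, $\nu_p(a_{k-1}) = +\infty$, and $\nu_p(a_{k-j}) = r$ for every $j \in \{2,\ldots,k\}$; in particular $\nu_p(a_0) = r$.

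Now I would verify the three conditions of Eisenstein--Dumas in turn. Condition (1), $\nu_p(a_k) = 0$, is immediate. For condition (2), with $n = k$, I need $\nu_p(a_{k-i})/i > r/k$ for $1 \leq i \leq k-1$. When $i = 1$ the left side is $+\infty$; when $2 \leq i \leq k-1$ it equals $r/i$, and $r/i > r/k$ holds because $i < k$. Finally, condition (3) is $\gcd(\nu_p(a_0), k) = \gcd(r,k) = 1$, which is exactly the hypothesis of the proposition.

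All three hypotheses hold, so the criterion yields that $Q_{T,k,\mathbb{F}_q}(X)$ is irreducible over $\mathbb{Q}$. There is really no substantive obstacle in this argument: everything reduces to the two trivial valuation facts $\nu_p(q) = r$ and $\nu_p(q-1) = 0$, and the coprimality of $r$ and $k$ is built into the statement precisely so that condition (3) holds. The interesting aspect is simply noticing that the shape of the coefficients $-q(q-1)^{j-2}$ makes the valuations constant in $j$, which is what allows Eisenstein--Dumas to apply uniformly.
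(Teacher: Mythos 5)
Your proof is correct and takes exactly the approach the paper intends: the paper's own proof is just the one-line remark that the result is "a direct consequence of Eisenstein--Dumas criterion," and your write-up supplies precisely the verification of the three hypotheses (with the correct coefficient valuations $\nu_p(a_{k-j})=r$ for $j\geq 2$ and $\nu_p(a_{k-1})=+\infty$) that this remark leaves implicit.
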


\begin{proof}
 This is a direct consequence of Eisenstein-Dumas criterion.
\end{proof}

Exponential sums over $\mathbb{F}_q$ of rotation functions also satisfy homogeneous linear recurrences.  However, in general, these linear recurrences have higher order than the 
homogeneous linear recurrences satisfied by exponential sums of trapezoid functions.  In other words, the identity observed over $\mathbb{F}_2$ between the linear recurrences of exponential sums of trapezoid Boolean functions and rotation symmetric Boolean functions is lost over $\mathbb{F}_q$.
For example, if we consider the monomial rotation 
\begin{equation}
 R_2(n)=X_1X_2+X_2X_3+\cdots+X_{n-1}X_n+X_nX_1,
\end{equation}
then we have the following result.  This is the first result that relies on linear algebra.
\begin{theorem}
\label{thmrot2}
 Suppose that $p>2$ is prime.  Then, $\{S_{\mathbb{F}_p}(R_2(n)\}$ satisfy the homogeneous linear recurrence with characteristic polynomial 
 \begin{equation}
  Q_{R,2,\mathbb{F}_p}(X)=X^{4}-p^{2}.
 \end{equation}
\end{theorem}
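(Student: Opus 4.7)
The plan is to exploit the cyclic structure of $R_2(n)$ through a transfer matrix. Let $\omega = e^{2\pi i/p}$ and define the $p \times p$ complex matrix $M$, with rows and columns indexed by $\mathbb{F}_p$, by $M_{a,b} = \omega^{ab}$. Because $R_2(n)$ is precisely the sum of products $x_i x_{i+1}$ around a cycle of length $n$, expanding the trace of a matrix power gives
\begin{equation*}
\operatorname{tr}(M^n) \;=\; \sum_{x_1,\ldots,x_n \in \mathbb{F}_p} M_{x_1,x_2} M_{x_2,x_3} \cdots M_{x_{n-1},x_n} M_{x_n,x_1} \;=\; S_{\mathbb{F}_p}(R_2(n)).
\end{equation*}
So the sequence under study is the trace sequence of powers of $M$.

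Next, I would show that $M$ satisfies the polynomial identity $M^4 = p^2 I$. A direct computation yields
\begin{equation*}
(M^2)_{a,c} \;=\; \sum_{b \in \mathbb{F}_p} \omega^{ab+bc} \;=\; \sum_{b \in \mathbb{F}_p} \omega^{b(a+c)} \;=\; p\,\delta_{a+c,0},
\end{equation*}
so $M^2 = pP$, where $P$ is the reflection permutation matrix with $P_{a,c} = \delta_{a,-c}$. Since $P^2 = I$, it follows that $M^4 = p^2 P^2 = p^2 I$. In other words, the polynomial $X^4 - p^2$ annihilates $M$. (Incidentally, $M$ is the unnormalized discrete Fourier transform matrix modulo $p$, mentioned in the abstract, so its eigenvalues lie in $\{\pm\sqrt{p}, \pm i\sqrt{p}\}$.)

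To conclude, from $M^{n+4} = p^2 M^n$ I would take traces to obtain
\begin{equation*}
S_{\mathbb{F}_p}(R_2(n+4)) \;=\; p^2\, S_{\mathbb{F}_p}(R_2(n))
\end{equation*}
for every $n$, which is the homogeneous linear recurrence with characteristic polynomial $Q_{R,2,\mathbb{F}_p}(X) = X^4 - p^2$. I do not foresee a genuine obstacle: the only real insight is to recognize the cyclic quadratic form $R_2(n)$ as a transfer-matrix trace, after which the computation of $M^2$ and $M^4$ is elementary. The hypothesis $p>2$ plays no essential role in the argument (the case $p=2$ was already handled in the previous section), and the method itself should generalize to produce recurrences for other ``two-variable'' monomial rotations via the same kind of transfer matrix.
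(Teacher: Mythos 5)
Your proof is correct, and it takes a genuinely different route from the paper's. The paper proceeds by its ``turning \emph{OFF} and \emph{ON}'' technique: it conditions on $X_n$ and $X_{n-1}$, builds a recursive generating set consisting of $a_0(n)=S_{\mathbb{F}_p}(T_2(n))$, $a_1(n)=S_{\mathbb{F}_p}(T_2(n)+X_n)$ and the $p(p-1)$ boundary-decorated sequences $b_{\alpha,\beta}(n)=S_{\mathbb{F}_p}(T_2(n)+\alpha X_1+\beta X_n)$, assembles the resulting $(2+p(p-1))$-dimensional block-diagonal matrix $A(p)$, and checks that the $2\times 2$ block satisfies $X^2-p$ while the $p\times p$ DFT-type blocks satisfy $X^4-p^2$. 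You instead recognize $S_{\mathbb{F}_p}(R_2(n))$ as $\operatorname{tr}(M^n)$ for the single $p\times p$ transfer matrix $M_{a,b}=\omega^{ab}$, so the cyclic boundary term $X_nX_1$ is absorbed into the trace and no boundary bookkeeping is needed; the identity $M^2=pP$ with $P$ the negation permutation, hence $M^4=p^2I$, then gives the exact relation $S_{\mathbb{F}_p}(R_2(n+4))=p^2\,S_{\mathbb{F}_p}(R_2(n))$ directly. The core computation is the same in both arguments --- your $M$ is, up to indexing, the paper's block $A_j(p)=\sqrt{p}\,W_p$, and both proofs hinge on its square being $p$ times an involution --- but your framing is more economical (a $p\times p$ matrix versus a $2+p(p-1)$ dimensional system), yields the recurrence in the sharpest possible form, and exposes the eigenvalues $\pm\sqrt{p},\pm i\sqrt{p}$, from which a closed form for the sequence could be extracted. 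The paper's heavier machinery has the advantage of being the template that generalizes to arbitrary rotations $R_{2,\dots,k}(n)$ in Theorem \ref{generalTHM}, where no such clean trace identity with a small matrix is available; your method, as you note, extends naturally to other two-variable monomial rotations but not obviously beyond.
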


\begin{proof}
Turn $X_n$ and $X_{n-1}$ {\it OFF} and {\it ON}, that is, let them assume all values in $\mathbb{F}_p$, and use the identity
\begin{equation}
 S_{\mathbb{F}_p}(T_2(n)+\beta X_n) = S_{\mathbb{F}_p}(T_2(n)+ X_n), \text{ for }\beta\in\mathbb{F}_p^{\times}
\end{equation}
to get the equation 
\begin{eqnarray}
\label{rot2}
 S_{\mathbb{F}_p}(R_2(n))&=& S_{\mathbb{F}_p}(T_2(n-2))+(p-1)S_{\mathbb{F}_p}(T_2(n-2)+X_{n-2})\\\nonumber
 &&+\sum_{\alpha\in\mathbb{F}_p^\times}\sum_{\beta \in \mathbb{F}_p}e^{\frac{2\pi i}{p}\alpha\beta}S_{\mathbb{F}_p}(T_2(n-2)+\alpha X_1+\beta X_{n-2}),
\end{eqnarray}
Let 
\begin{eqnarray}
 a_0(n)&=&S_{\mathbb{F}_p}(T_2(n))\\\nonumber
 a_1(n)&=&S_{\mathbb{F}_p}(T_2(n)+X_n)\\\nonumber
 b_{\alpha,\beta}(n)&=&S_{\mathbb{F}_p}(T_2(n)+\alpha X_1+\beta X_n)\,\, \text{ for }\alpha\in\mathbb{F}_p^\times, \beta \in \mathbb{F}_p.
\end{eqnarray}
Then,
\begin{eqnarray}
 S_{\mathbb{F}_p}(R_2(n))&=& a_0(n-2)+(p-1)a_1(n-2)+\sum_{\alpha\in\mathbb{F}_p^\times}\sum_{\beta \in \mathbb{F}_p}e^{\frac{2\pi i}{p}\alpha\beta}b_{\alpha,\beta}(n-2).
\end{eqnarray}
Observe that
\begin{eqnarray}
 a_0(n)&=& a_0(n-1)+(p-1)a_1(n-1)\\\nonumber
 a_1(n)&=& a_0(n-1)-a_1(n-1)\\\nonumber
 b_{\alpha,\beta}(n)&=& \sum_{\gamma \in \mathbb{F}_p} e^{\frac{2\pi i}{p}(\beta \gamma)} b_{\alpha,\gamma}(n-1),
\end{eqnarray}
which can be written in matrix form as
\begin{equation}
 \left(
 \begin{array}{c}
 a_0(n)\\
 a_1(n)\\
 b_{1,0}(n)\\
 b_{1,1}(n)\\
 \vdots\\
 b_{p-1,p-1}(n)
 \end{array}
 \right)=A(p)\left(
 \begin{array}{c}
 a_0(n-1)\\
 a_1(n-1)\\
 b_{1,0}(n-1)\\
 b_{1,1}(n-1)\\
 \vdots\\
 b_{p-1,p-1}(n-1)
 \end{array}
 \right)
\end{equation}
where
\begin{eqnarray}
 A(p)&=\left(
\begin{array}{c|c|c|c|c}
 A_0(p)& O & O  &\cdots & O\\
 \hline
 O & A_1(p) & O &\cdots & O\\
 \hline
  O & O & A_2(p) &\cdots & O\\
  \hline
\vdots & \vdots & \vdots &\ddots & \vdots\\
\hline
  O & O & O &\cdots & A_{p-1}(p)\\
\end{array}
\right),
\end{eqnarray}
and
\begin{equation}
A_0(p)=\left(
\begin{array}{cc}
1 & p-1\\
 1 & -1 \\
 \end{array}\right)\,\,\,\, \text{ and } \,\,\,\,A_{j}(p) = \left(
\begin{array}{ccccc}
 1 & 1 & 1 & \cdots & 1 \\
 1 & e^{\frac{2  \pi i}{p}} & e^{\frac{4  \pi i}{p}}& \cdots & e^{\frac{2(p-1)\pi i}{p}} \\
  1 & e^{\frac{4   \pi i}{p}} & e^{\frac{8 \pi i}{p}}& \cdots & e^{\frac{2\times 2(p-1)\pi i}{p}} \\
 \vdots & \vdots & \vdots&  \ddots & \vdots \\
  1 & e^{\frac{2(p-1) \pi i}{p}} & e^{\frac{4(p-1) \pi i}{p}}& \cdots & e^{\frac{2\times (p-1)^2\pi i}{p}} \\
\end{array}
\right), 
\end{equation}
for $1\leq j \leq p-1.$
It is clear that the first block $A_0(p)$ satisfies $X^2-p$.  All other blocks $A_j(p)$'s, for $1\leq j \leq p-1$, are $\sqrt{p}\cdot W_p$, where $W_p$ is the $p\times p$ square Discrete Fourier 
Transform matrix.
Observe that
\begin{equation}
 A_j(p)^2 = \left(
\begin{array}{ccccc}
 p & 0 & \cdots & 0 & 0 \\
 0 & 0 & \cdots & 0 & p \\
 0 & 0 & \cdots & p & 0 \\
 \vdots & \vdots & \Ddots & \vdots & \vdots \\
 0 & p & \cdots & 0 & 0 \\
\end{array}
\right).
\end{equation}
Therefore,
\begin{equation}
A_j(p)^4 = \left(
\begin{array}{ccccc}
 p^2 & 0 & 0 &  \cdots & 0 \\
 0 & p^2 & 0 & \cdots & 0 \\
 0 & 0 & p^2 &\cdots & 0 \\
 \vdots & \vdots & \vdots & \ddots & \vdots \\
 0 & 0 & 0 &\cdots  & p^2 \\
\end{array}
\right).
\end{equation}
In other words, the big blocks $A_j(p)$'s satisfiy $X^4-p^2$.  Since $X^2-p\,|\, X^4-p^2$, then we conclude that the matrix $A(p)$ satisfies the polynomial
 \begin{equation}
  Q_{R,2,\mathbb{F}_p}(X)=X^{4}-p^{2}. 
 \end{equation}
This means that the sequences $\{a_0(n)\}$, $\{a_1(n)\}$ and $\{b_{\alpha,\beta}(n)\}$, for $\alpha \in\mathbb{F}_p^\times,\beta \in \mathbb{F}_p$, all satisfy the linear
recurrence with characteristic polynomial given by $Q_{R,2,\mathbb{F}_p}(X)$.  Since $\{S_{\mathbb{F}_p}(R_2(n))\}$ is a combination of these sequences, then it also satisfies such recurrence.
This concludes the proof.
\end{proof}

We are now ready to prove one of the main results of this article.  That is, exponential sums of rotation polynomials satisfiy linear recurrences with integer coefficients.

\begin{theorem}
\label{generalTHM}
 Let $k\geq 2$ be an integer and $q=p^r$ with $p$ prime and $r\geq 1$.  The sequence $\{S_{\mathbb{F}_q}(R_{2,3,\cdots,k}(n))\}_{n\geq k}$ satisfies a linear recurrence with integer coefficients.
\end{theorem}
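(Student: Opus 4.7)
The plan is to extend the transfer-matrix approach of Theorem~\ref{thmrot2} to general $k$, using an auxiliary state space large enough to close up under a single ``turn $X_n$ on/off'' step. The crucial observation is the decomposition
\begin{equation*}
R_{2,3,\cdots,k}(n) = T_{2,3,\cdots,k}(n) + W\left(X_1,\cdots,X_{k-1},X_{n-k+2},\cdots,X_n\right),
\end{equation*}
where $W$ is the sum of the $k-1$ wrap-around monomials; $W$ depends only on the first $k-1$ and the last $k-1$ of the $n$ variables.

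Let $\mathcal{F}$ denote the finite set of polynomials $F(Y_1,\cdots,Y_{k-1},Z_1,\cdots,Z_{k-1})$ in $2(k-1)$ abstract variables over $\mathbb{F}_q$, and for $n$ large enough that the $Y$ and $Z$ slots are disjoint set
\begin{equation*}
v_F(n) = S_{\mathbb{F}_q}\left(T_{2,3,\cdots,k}(n) + F(X_1,\cdots,X_{k-1},X_{n-k+2},\cdots,X_n)\right),
\end{equation*}
so that $S_{\mathbb{F}_q}(R_{2,3,\cdots,k}(n)) = v_W(n)$ for the $W$ above. Letting $X_n$ run over $\mathbb{F}_q$ and using $T_{2,3,\cdots,k}(n)|_{X_n=\beta} = T_{2,3,\cdots,k}(n-1) + \beta X_{n-1}X_{n-2}\cdots X_{n-k+1}$ yields
\begin{equation*}
v_F(n) = \sum_{\beta\in\mathbb{F}_q} v_{\phi_\beta(F)}(n-1),
\end{equation*}
where $\phi_\beta:\mathcal{F}\to\mathcal{F}$ absorbs the shift $n\to n-1$ (the old right-hand variable $X_{n-k+1+j}$ becomes the new $Z_{j+1}$, and the vacated slot $Z_{k-1}$ is filled by $\beta$) together with the new perturbation $\beta Z_1Z_2\cdots Z_{k-1}$ coming from the dropped last trapezoid term, so that explicitly
\begin{equation*}
\phi_\beta(F)(Y,Z_1,\cdots,Z_{k-1}) = F(Y,Z_2,\cdots,Z_{k-1},\beta) + \beta Z_1 Z_2 \cdots Z_{k-1}.
\end{equation*}

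Collecting the $v_F(n)$ into a column vector $\mathbf{v}(n)\in\mathbb{C}^{|\mathcal{F}|}$ produces $\mathbf{v}(n) = A\,\mathbf{v}(n-1)$ with the non-negative integer matrix $A_{F,G} = |\{\beta\in\mathbb{F}_q : \phi_\beta(F)=G\}|$. Because $A$ is a constant integer matrix, its characteristic polynomial $\chi_A(X)$ lies in $\mathbb{Z}[X]$, and Cayley--Hamilton forces every coordinate of $\mathbf{v}(n)$---in particular $v_W(n) = S_{\mathbb{F}_q}(R_{2,3,\cdots,k}(n))$---to satisfy the integer-coefficient linear recurrence with characteristic polynomial $\chi_A$. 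The main obstacle is the index bookkeeping needed to verify that $\phi_\beta$ is really a self-map of $\mathcal{F}$, i.e., that the relabeling and the newly introduced perturbation really assemble into a polynomial in the same $2(k-1)$ abstract variables, so that $A$ is genuinely constant in $n$. Once that is established, everything else is formal; to obtain a much smaller recurrence one could restrict $\mathcal{F}$ to the $\{\phi_\beta\}$-orbit of $W$, in the spirit of Theorem~\ref{thmrot2}.
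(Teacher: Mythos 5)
Your argument is correct, and it is essentially the paper's own transfer-matrix strategy: perturb the trapezoid polynomial by terms supported on the boundary variables, close the resulting family of exponential sums under the operation of summing out $X_n$, and apply Cayley--Hamilton to the transition matrix. The one genuine difference is the choice of state space, and it buys you something. The paper works with the smaller family $a_{\boldsymbol{\alpha};\boldsymbol{\beta}}(n)$ of ``staircase'' perturbations indexed by $(\boldsymbol{\alpha},\boldsymbol{\beta})\in\mathbb{F}_q^{k-1}\times\mathbb{F}_q^{k-1}$ (only $q^{2(k-1)}$ states), but the price is that the transition coefficients are roots of unity, so the matrix lives over $\mathbb{Z}[\zeta_p]$ and an additional argument (the minimal polynomial is monic with algebraic-integer coefficients, and integrality is transitive) is needed to extract an annihilating polynomial in $\mathbb{Z}[X]$. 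You instead take all reduced polynomials in the $2(k-1)$ boundary variables --- an enormously larger but still finite set --- and in exchange the transition matrix $A_{F,G}=|\{\beta\in\mathbb{F}_q:\phi_\beta(F)=G\}|$ is a non-negative integer matrix, so the integrality of the recurrence is immediate. The ``main obstacle'' you flag is in fact already disposed of by your explicit formula for $\phi_\beta$, which is correct: at level $n-1$ the right-hand slots are $X_{n-k+1},\ldots,X_{n-1}$, matching your relabeling, and the dropped trapezoid term contributes exactly $\beta Z_1Z_2\cdots Z_{k-1}$. Two small points should be made explicit: $\mathcal{F}$ must consist of reduced polynomials (or just the multilinear ones in the $\phi_\beta$-orbit of $W$) to be finite, which is harmless since exponential sums depend only on the induced function and $\phi_\beta$ preserves reducedness; and the relation $v_F(n)=\sum_{\beta}v_{\phi_\beta(F)}(n-1)$ is only valid once the $Y$- and $Z$-slots are disjoint at both levels, i.e.\ for $n\geq 2k$, so the recurrence holds for all sufficiently large $n$ --- the same implicit caveat as in the paper's proof.
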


\begin{proof}
 Let $\zeta_p=e^{2\pi i/p}$. Consider the expression $S_{\mathbb{F}_q}(R_{2,3,\cdots, k}(n+k))$.  Let $X_{n+k}, X_{n+k-1}, \cdots, X_n$ assume all values in $\mathbb{F}_q$ and observe that $S_{\mathbb{F}_q}(R_{2,3,\cdots, k}(n+k))$
 can be written as a linear combination of expressions of the form
 \begin{equation}
 \label{generatingseqs}
  a_{\boldsymbol{\alpha};\boldsymbol{\beta}}(n)=S_{\mathbb{F}_q}\left(T_{2,3,\cdots, k}(n)+\sum _{j=1}^{k-1} \left(\alpha _j \prod _{l=1}^j X_{n+1-l}+\beta _j \prod _{l=1}^j X_l\right)\right),
 \end{equation}
 where $\boldsymbol{\alpha}=(\alpha_1,\cdots,\alpha_k) \in \mathbb{F}_q^{k-1}$ and $\boldsymbol{\beta}=(\beta_1,\cdots,\beta_k) \in \mathbb{F}_q^{k-1}$.
 However, note that for each $\boldsymbol{\alpha},\boldsymbol{\beta} \in \mathbb{F}_q^{k-1}$, we have
 \begin{equation}
 \label{lineareqs}
  a_{\boldsymbol{\alpha};\boldsymbol{\beta}}(n) = \sum_{\boldsymbol{\gamma},\boldsymbol{\lambda}\in \mathbb{F}_q^{k-1}}c_{\boldsymbol{\gamma},\boldsymbol{\lambda}}\cdot a_{\boldsymbol{\gamma},\boldsymbol{\lambda}}(n-1),
 \end{equation}
where $c_{\boldsymbol{\gamma},\boldsymbol{\lambda}} \in \mathbb{Z}[\zeta_p]$ is a cyclotomic integer.  Let $A_{2,3,\cdots, k}(q)$ be the corresponding matrix for the linear equations in (\ref{lineareqs}) and
$F(X)$ be any annihilating polynomial for $A_{2,3,\cdots, k}(q)$.   We can assume that $F(X)$ has integer coefficients.  This is because the minimal polynomial of $A_{2,3,\cdots,k}(q)$ is monic, 
has algebraic integers coefficients and integrality is transitive.  Then each $\{a_{\boldsymbol{\alpha};\boldsymbol{\beta}}(n)\}_n$ satisfies the linear recurrence with characteristic polynomial given
by $F(X)$. Since $\{S_{\mathbb{F}_q}(R_{2,3,\cdots, k}(n+k))\}$ is a linear combination of these sequences, then $\{S_{\mathbb{F}_q}(R_{2,3,\cdots, k}(n+k))\}$ also satisfies such recurrence.
This concludes the proof.
\end{proof}

We know that the identity between linear recurrences of exponential sums of trapezoid Boolean functions and rotation symmetric Boolean functions is lost over $\mathbb{F}_q$.  However, 
the proof of Theorem \ref{generalTHM} suggests that a relation can be recovered.

\begin{corollary}
Let $q=p^r$ with $p$ prime and $r\geq 1$.  Let $\mu_{T,k,\mathbb{F}_q}(X)$ and $\mu_{R,k,\mathbb{F}_q}(X)$ be the characteristic polynomials associated to the minimal homogeneous linear recurrences with integer coefficients 
satisfied by $\{S_{\mathbb{F}_q}(T_{2,3,\cdots, k}(n))\}$ and $\{S_{\mathbb{F}_q}(R_{2,3,\cdots, k}(n))\}$ (resp.).  Then,
\begin{equation}
\label{divisibilifyres}
 \mu_{T,k,\mathbb{F}_q}(X)\,|\, \mu_{R,k,\mathbb{F}_q}(X).
\end{equation}
In particular, if $\gcd(k,r)=1$,  then $Q_{T,k,\mathbb{F}_q}(X)\,|\, \mu_{R,k,\mathbb{F}_q}(X)$
\end{corollary}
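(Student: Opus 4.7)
The plan is to build on the state-space construction from the proof of Theorem~\ref{generalTHM}. There, the sequences $\{a_{\boldsymbol{\alpha};\boldsymbol{\beta}}(n)\}_{\boldsymbol{\alpha},\boldsymbol{\beta}\in\mathbb{F}_q^{k-1}}$ evolve from a common state vector $v(n)$ under a matrix $A_{2,3,\ldots,k}(q)$ with entries in $\mathbb{Z}[\zeta_p]$; the trapezoid sum appears as the coordinate $a_{\mathbf{0},\mathbf{0}}(n) = S_{\mathbb{F}_q}(T_{2,3,\ldots,k}(n))$, and the rotation sum is $S_{\mathbb{F}_q}(R_{2,3,\ldots,k}(n+k)) = \phi_R^{T}v(n)$ for a fixed coefficient vector $\phi_R$ whose $(\mathbf{0},\mathbf{0})$-entry is nonzero (this entry arises from the single summand of the expansion in which all boundary test-variables are set to zero).

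The divisibility $\mu_{T,k,\mathbb{F}_q}(X)\mid \mu_{R,k,\mathbb{F}_q}(X)$ is equivalent to the assertion that every integer polynomial annihilating $\{S_{\mathbb{F}_q}(R_{2,3,\ldots,k}(n))\}$ also annihilates $\{S_{\mathbb{F}_q}(T_{2,3,\ldots,k}(n))\}$. The strategy is to produce a polynomial $Q(X)\in\mathbb{Q}(\zeta_p)[X]$ such that
\begin{equation*}
S_{\mathbb{F}_q}(T_{2,3,\ldots,k}(n)) \;=\; Q(E)\,S_{\mathbb{F}_q}(R_{2,3,\ldots,k}(n+k)) \quad \text{for all large } n,
\end{equation*}
where $E$ is the shift operator $n\mapsto n+1$. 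Once such a $Q$ is available, any $P(X)\in\mathbb{Z}[X]$ annihilating the rotation sequence commutes with $Q(E)$ and hence annihilates $S_{\mathbb{F}_q}(T_{2,3,\ldots,k}(n))$ as well, so $\mu_{T,k,\mathbb{F}_q}\mid P$, and in particular $\mu_{T,k,\mathbb{F}_q}\mid \mu_{R,k,\mathbb{F}_q}$ (the divisibility descending from $\mathbb{Q}(\zeta_p)[X]$ to $\mathbb{Z}[X]$ via Gauss's lemma, since both minimal polynomials are monic with integer coefficients).

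In state-space language, the existence of $Q$ is equivalent to showing that $e_{\mathbf{0},\mathbf{0}}$ lies in the cyclic $A^{T}$-submodule generated by $\phi_R$, modulo the annihilator of the cyclic subspace $\langle v(0)\rangle_{A}$. I would approach this by iterating the test-variable game of Theorem~\ref{generalTHM}: introducing $j$ additional test variables and summing them over $\mathbb{F}_q$ expresses each $S_{\mathbb{F}_q}(R_{2,3,\ldots,k}(n+k+j))$ as a cyclotomic-integer combination of $a_{\boldsymbol{\alpha};\boldsymbol{\beta}}(n)$'s indexed over an enlarged parameter set, and orthogonality of additive characters on $\mathbb{F}_q$ should then allow one to invert the resulting linear system and isolate $a_{\mathbf{0},\mathbf{0}}(n)$ as a finite $\mathbb{Q}(\zeta_p)$-linear combination of shifts of $S_{\mathbb{F}_q}(R_{2,3,\ldots,k}(n))$. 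The main obstacle is the inversion step: verifying that the iterated functionals $\{(A^{T})^{j}\phi_R\}_{j\geq 0}$ really span enough of the dual state space, modulo the annihilator of $\langle v(0)\rangle_{A}$, to capture $e_{\mathbf{0},\mathbf{0}}$. This is a non-degeneracy property of $\phi_R$ reminiscent of the block-diagonalization argument used in the proof of Theorem~\ref{thmrot2} and should be checked by a careful inspection of the block structure of $A_{2,3,\ldots,k}(q)$.

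For the ``in particular'' statement, combine the divisibility just established with the preceding Proposition: when $\gcd(k,r)=1$ the polynomial $Q_{T,k,\mathbb{F}_q}(X)$ is irreducible over $\mathbb{Q}$, and Theorem~\ref{thmtrapgen} gives $\mu_{T,k,\mathbb{F}_q}\mid Q_{T,k,\mathbb{F}_q}$. Since the trapezoid sequence is not eventually zero, $\mu_{T,k,\mathbb{F}_q}$ is nonconstant, so irreducibility forces $\mu_{T,k,\mathbb{F}_q}=Q_{T,k,\mathbb{F}_q}$, and the main divisibility then yields $Q_{T,k,\mathbb{F}_q}(X)\mid \mu_{R,k,\mathbb{F}_q}(X)$.
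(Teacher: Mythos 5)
Your reduction is sound as far as it goes: if one can exhibit $Q(X)\in\mathbb{Q}(\zeta_p)[X]$ with $S_{\mathbb{F}_q}(T_{2,\ldots,k}(n)) = Q(E)\,S_{\mathbb{F}_q}(R_{2,\ldots,k}(n+k))$ for all large $n$, then every integer annihilator of the rotation sequence kills the trapezoid sequence and the divisibility follows; and your handling of the ``in particular'' clause via the Eisenstein--Dumas proposition and Theorem \ref{thmtrapgen} is exactly right. But the existence of $Q$ --- your ``inversion step,'' i.e.\ that $e_{\mathbf{0},\mathbf{0}}$ lies in the cyclic $A^{T}$-module generated by $\phi_R$ modulo the annihilator of $\langle v(0)\rangle_{A}$ --- is precisely the substance of the corollary, and you leave it as something that ``should be checked by a careful inspection of the block structure.'' Nothing in the proof of Theorem \ref{generalTHM} hands you this non-degeneracy for free: that proof only shows that $S_{\mathbb{F}_q}(R_{2,\ldots,k}(n+k))$ is \emph{some} cyclotomic-integer combination of the $a_{\boldsymbol{\alpha};\boldsymbol{\beta}}(n)$, and a priori cancellation among the summands could make the rotation sequence satisfy a recurrence missing part of $\mu_{T,k,\mathbb{F}_q}$. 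As written, the proposal is a plan whose key lemma is unproved.

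For comparison, the paper's own proof runs in the opposite direction and is a one-line observation: the trapezoid sequence is itself one of the recursive generating sequences of Theorem \ref{generalTHM}, namely $a_{\boldsymbol{0};\boldsymbol{0}}(n)=S_{\mathbb{F}_q}(T_{2,\ldots,k}(n))$, so the recurrences produced by the transition matrix $A_{2,\ldots,k}(q)$ are common to both sequences, and the divisibility is read off from that. (Strictly, that observation directly yields only that $\mu_{T,k,\mathbb{F}_q}$ and $\mu_{R,k,\mathbb{F}_q}$ both divide any integer annihilating polynomial of $A_{2,\ldots,k}(q)$; upgrading it to $\mu_{T,k,\mathbb{F}_q}\mid\mu_{R,k,\mathbb{F}_q}$ still needs a non-cancellation argument of the very kind you flagged, so your instinct about where the difficulty sits is good --- but the route you chose obliges you to actually carry out the character-orthogonality inversion, which you have not done.)
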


\begin{proof}
 In the proof of Theorem \ref{generalTHM}.  Observe that $\{a_{\boldsymbol{0};\boldsymbol{0}}(n)\}=\{S_{q}(T_{2,3,\cdots,k}(n))\}$, this implies (\ref{divisibilifyres}).  Now, if $\gcd(k,r)=1$,  
then $Q_{T,k,\mathbb{F}_q}(X)$ is irreducible and therefore $\mu_{T,k,\mathbb{F}_q}(X)=Q_{T,k,\mathbb{F}_q}(X)$.  This concludes the proof.
\end{proof}

\begin{definition}
Let $\{b(n)\}$ be a sequence on an integral domain $D$.  A set of sequences $$\{\{a_1(n)\}, \{a_2(n)\},\cdots, \{a_s(n)\}\},$$ where $s$ is some natural number, is called a 
{\em recursive generating set for} $\{b(n)\}$ if 
\begin{enumerate}
 \item there is an integer $l$ such that for every $n$, $b(n)$ can be written as a linear combination of the form 
 $$b(n)=\sum_{j=1}^s c_j\cdot a_j(n-l),$$  
 where $c_j$'s are constants that belong to $D$, and
 \item for each $1\leq j_0 \leq s$ and every $n$, $a_{j_0}(n)$ can be written as a linear combination of the form 
 $$a_{j_0}(n)=\sum_{j=1}^s d_j\cdot a_j(n-1),$$
 where $d_j$'s are also constants that belong to $D$.
\end{enumerate}
The sequences $\{a_j(n)\}$'s are called {\it recursive generating sequences for} $\{b(n)\}$.
\end{definition}

\begin{remark}
It is a well-known result in the theory of recursive sequences that a sequence that has a recursive generating set satisfies a linear recurrence with constant coefficients.  In fact, this technique 
has been used in Theorems \ref{thmrot2} and \ref{generalTHM}.
\end{remark}

Theorem \ref{generalTHM} generalizes to monomial rotation functions and linear combinations of them, that is, exponential sums over any Galois field of linear combinations of monomial rotation polynomials 
satisfy linear recurrences.  Of course, in general, we might need to turn {\it OFF} and {\it ON} more than $k$ variables, even if the rotation is of degree $k$.  Also, even though the sequences 
(\ref{generatingseqs}) always exist, their number might be too big to be handled by hand.  For example, consider the sequence of exponential sums $\{S_{\mathbb{F}_3}(R_{2,3}(n))\}$.  After some 
identifications, the authors needed 24 different recursive generating sequences (not claiming that this is optimal) of the form  (\ref{generatingseqs}) and their corresponding $24\times 24$ matrix in order 
to find that $\{S_{\mathbb{F}_3}(R_{2,3}(n))\}$ satisfy the linear recurrence whose characteristic polynomial is given by
\begin{eqnarray}
 X^6-3 X^4-9 X^3+9 X+18&=&\left(X^3-3\right) \left(X^3-3 X-6\right)\\\nonumber
 &=&\left(X^3-3\right)Q_{T,3,\mathbb{F}_3}(X).
\end{eqnarray}
Also, in general, finding the minimal polynomial of a matrix is not an easy task, therefore explicit formulas like the ones in Theorem \ref{thmtrapgen} and Theorem \ref{thmrot2} are much harder to get.

In the next section, this technique is used to prove that exponential sums over Galois fields of elementary symmetric polynomials (and linear combinations of them) satisfy homogeneous linear
recurrences with integer coefficients.


\section{Linear recurrences over $\mathbb{F}_q$: Symmetric polynomials case}
\label{symmetriccase}

It is a well-established result that exponential sums of symmetric Boolean functions are linear recurrent.  This was first established by Cai, Green and Thierauf \cite{cai}.   In \cite{cm1}, Castro and 
Medina use this result to show that a conjecture of Cusick, Li, St$\check{\mbox{a}}$nic$\check{\mbox{a}}$ \cite{cusick2} is true asymptotically.  In \cite{cm2}, some of the results of \cite{cm1} where 
extended to some perturbations of symmetric Boolean functions.  This recursivity was also used in \cite{cm3, cusick4} to study the periodicity mod $p$ ($p$ prime) of exponential sums of symmetric Boolean functions. 

In this section we show that exponential sums of some symmetric polynomials are linear recurrent over any Galois field.  Remarkably, the proof uses the same argument as in the proof of 
Theorem \ref{generalTHM}. We decided to include the proof for completeness of the writing.  However, the reader is welcome to skip the proof.

Let $\sigma_{n,k}$ be the elementary symmetric polynomial in $n$ variables of degree $k$. For example,
\begin{equation}
\sigma_{4,3} = X_1 X_2 X_3+X_1 X_4 X_3+X_2 X_4 X_3+X_1 X_2 X_4.
\end{equation}
We have the following result.

\begin{theorem}
\label{generalTHMSymmetric}
 Let $k\geq 2$ be an integer and $q=p^r$ with $p$ prime and $r\geq 1$.  The sequence $\{S_{\mathbb{F}_q}(\sigma_{n,k})\}$ satisfies a linear recurrence with constant coefficients.
\end{theorem}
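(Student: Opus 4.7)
The plan is to adapt the recursive generating set strategy of Theorem \ref{generalTHM}, using the Pascal-type identity
\[
\sigma_{n,j}(X_1,\ldots,X_n)=\sigma_{n-1,j}(X_1,\ldots,X_{n-1})+X_n\,\sigma_{n-1,j-1}(X_1,\ldots,X_{n-1}),
\]
with the convention $\sigma_{m,0}=1$. The key observation is that this identity stays inside the finite-dimensional space of linear combinations $\alpha_0+\sum_{j=1}^{k}\alpha_j\sigma_{n,j}$, so no extraneous monomials appear when $X_n$ is turned ON and OFF.

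Concretely, I would introduce the family of generating sequences
\[
a_{\boldsymbol{\alpha}}(n)=S_{\mathbb{F}_q}\!\left(\alpha_0+\sum_{j=1}^k\alpha_j\,\sigma_{n,j}\right),\qquad \boldsymbol{\alpha}=(\alpha_0,\alpha_1,\ldots,\alpha_k)\in\mathbb{F}_q^{k+1},
\]
so that the quantity of interest is $S_{\mathbb{F}_q}(\sigma_{n,k})=a_{\boldsymbol{\alpha}^*}(n)$ with $\boldsymbol{\alpha}^*=(0,\ldots,0,1)$. Letting $X_n$ run over all $c\in\mathbb{F}_q$ and collecting terms via the Pascal identity produces
\[
\alpha_0+\sum_{j=1}^k\alpha_j\,\sigma_{n,j}\bigg|_{X_n=c}=(\alpha_0+c\alpha_1)+\sum_{j=1}^{k-1}(\alpha_j+c\alpha_{j+1})\,\sigma_{n-1,j}+\alpha_k\,\sigma_{n-1,k},
\]
and therefore
\[
a_{\boldsymbol{\alpha}}(n)=\sum_{c\in\mathbb{F}_q}a_{T_c(\boldsymbol{\alpha})}(n-1),
\]
where $T_c\colon\mathbb{F}_q^{k+1}\to\mathbb{F}_q^{k+1}$ is the linear map defined by $T_c(\boldsymbol{\alpha})_j=\alpha_j+c\alpha_{j+1}$ for $0\leq j\leq k-1$ and $T_c(\boldsymbol{\alpha})_k=\alpha_k$.

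Because $T_c$ sends $\mathbb{F}_q^{k+1}$ into itself, the finite collection $\{a_{\boldsymbol{\alpha}}(n):\boldsymbol{\alpha}\in\mathbb{F}_q^{k+1}\}$ is a recursive generating set in the sense of the Definition above, with $q^{k+1}\times q^{k+1}$ transition matrix $M$ whose entries are non-negative integers $M_{\boldsymbol{\alpha},\boldsymbol{\beta}}=\#\{c\in\mathbb{F}_q:T_c(\boldsymbol{\alpha})=\boldsymbol{\beta}\}$. Since $M$ has integer entries, its minimal polynomial lies in $\mathbb{Z}[X]$, and each coordinate of the vector $(a_{\boldsymbol{\alpha}}(n))$ satisfies the homogeneous linear recurrence attached to this minimal polynomial; in particular, so does $S_{\mathbb{F}_q}(\sigma_{n,k})=a_{\boldsymbol{\alpha}^*}(n)$. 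This actually yields integer coefficients, strictly stronger than what the statement requires.

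There is essentially no obstacle: the argument is parallel to the proof of Theorem \ref{generalTHM}, and the symmetric case is in fact cleaner than the rotation case because the Pascal recursion automatically stays within the chosen finite family of linear combinations of the $\sigma_{n,j}$, so one never needs to track boundary-type auxiliary monomials as was required when handling $R_{2,3,\ldots,k}(n)$. The only real care is in the bookkeeping of the map $T_c$ and in recognizing that closure of the family under the one-step reduction is guaranteed because $T_c$ does not introduce new indices. The same argument clearly extends to exponential sums of arbitrary linear combinations $\sum_{j=1}^k \alpha_j\sigma_{n,j}$, matching the claim asserted in the introduction.
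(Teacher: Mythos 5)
Your proof is correct and follows essentially the same route as the paper: a recursive generating set consisting of exponential sums of $\mathbb{F}_q$-linear combinations of elementary symmetric polynomials, closed under the one-step reduction obtained by letting $X_n$ range over $\mathbb{F}_q$ via the Pascal identity $\sigma_{n,j}=\sigma_{n-1,j}+X_n\,\sigma_{n-1,j-1}$. Your one refinement---carrying the constant term $\alpha_0$ inside the family rather than extracting it as a root-of-unity factor---makes the transition matrix have non-negative integer entries, so integrality of the recurrence is immediate, whereas the paper's family omits constants and must appeal to the algebraic-integrality argument of Theorem \ref{generalTHM} to pass from cyclotomic to integer coefficients.
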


\begin{proof}
Consider the expression $S_{\mathbb{F}_q}(\sigma_{n+k,k})$.  Define
\begin{equation}
 \label{generatingseqsSym}
  a_{\boldsymbol{\beta}}(n)=S_{\mathbb{F}_q}\left(\sigma_{n,k}+\sum _{j=1}^{k-1} \beta_j \sigma_{n,k-j} \right),
 \end{equation}
The set $\{a_{\boldsymbol{\beta}}(n)\}_{\boldsymbol{\beta}\in \mathbb{F}_q^{k-1}}$ is a recursive generating set for $S_{\mathbb{F}_q}(\sigma_{n+k,k})$.  Therefore, the sequence 
$\{S_{\mathbb{F}_q}(\sigma_{n+k,k})\}_{n\geq 0}$ satisfies a linear recurrence with constant coefficients.  As in the proof of Theorem \ref{generalTHM}, it can be argued that a linear recurrence with 
integer coefficients is guaranteed to exist.  This concludes the proof.
\end{proof}

This result can be generalized to any polynomial of the form 
\begin{equation}
\sum _{j=0}^{k-1} \beta_j \sigma_{n,k-j},
\end{equation}
with $\beta_j \in \mathbb{F}_q$.  We present the result without proof, as it follows almost verbatim as the one from Theorem \ref{generalTHMSymmetric}.

\begin{theorem}
\label{MoregeneralTHMSymmetric}
 Let $k\geq 2$ be an integer and $q=p^r$ with $p$ prime and $r\geq 1$.  The sequence 
\begin{equation}
 S_{\mathbb{F}_q}\left(\sum _{j=0}^{k-1} \beta_j \sigma_{n,k-j}\right)
\end{equation}
satisfies a linear recurrence with constant coefficients, regardless of the choice of the $\beta_j$'s.
\end{theorem}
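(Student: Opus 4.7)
The plan is to mimic the proof of Theorem \ref{generalTHMSymmetric} essentially verbatim, enlarging the recursive generating set so that the coefficient of $\sigma_{n,k}$ is also allowed to vary. The one new wrinkle is a constant contribution from $\sigma_{n,0}=1$, which fortunately pulls out of the exponential sum as a root of unity.

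For each $\boldsymbol{\beta}=(\beta_0,\beta_1,\ldots,\beta_{k-1})\in\mathbb{F}_q^{k}$ I would define
$$a_{\boldsymbol{\beta}}(n) = S_{\mathbb{F}_q}\!\left(\sum_{j=0}^{k-1}\beta_j\,\sigma_{n,k-j}\right),$$
so the sequence in the statement is one of these $a_{\boldsymbol{\beta}}$. To produce a recursive generating set, I would let $X_{n+1}$ run over $\mathbb{F}_q$ and apply the classical identity $\sigma_{n+1,m}=\sigma_{n,m}+X_{n+1}\,\sigma_{n,m-1}$. Specializing $X_{n+1}=\alpha$ and using $\sigma_{n,0}=1$ rewrites the restricted polynomial as
$$\beta_0\,\sigma_{n,k}+\sum_{j=1}^{k-1}(\beta_j+\alpha\beta_{j-1})\,\sigma_{n,k-j}+\alpha\beta_{k-1}.$$
Pulling the constant out of the exponential sum as $e^{\frac{2\pi i}{p}\text{Tr}_{\mathbb{F}_q/\mathbb{F}_p}(\alpha\beta_{k-1})}$ yields
$$a_{\boldsymbol{\beta}}(n+1)=\sum_{\alpha\in\mathbb{F}_q}e^{\frac{2\pi i}{p}\text{Tr}_{\mathbb{F}_q/\mathbb{F}_p}(\alpha\beta_{k-1})}\,a_{\boldsymbol{\beta}'(\alpha)}(n),$$
where $\boldsymbol{\beta}'(\alpha)=(\beta_0,\,\beta_1+\alpha\beta_0,\,\ldots,\,\beta_{k-1}+\alpha\beta_{k-2})$. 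Crucially $\beta_0$ is preserved by $\boldsymbol{\beta}\mapsto\boldsymbol{\beta}'(\alpha)$, so with $\beta_0$ held fixed the $q^{k-1}$ sequences form a closed recursive generating set whose transition matrix has entries in $\mathbb{Z}[\zeta_p]$.

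Finally, I would invoke the cyclotomic integrality argument used at the end of the proof of Theorem \ref{generalTHM}: the minimal polynomial of this transition matrix is monic with algebraic integer coefficients, so taking the product of its $\mathrm{Gal}(\mathbb{Q}(\zeta_p)/\mathbb{Q})$-conjugates yields a monic integer-coefficient polynomial that still annihilates the matrix. This delivers a homogeneous linear recurrence with integer coefficients satisfied by every $a_{\boldsymbol{\beta}}(n)$, and hence by the sequence in the theorem. There is no substantive obstacle here; the only novelty relative to Theorem \ref{generalTHMSymmetric} is the bookkeeping of the constant $\alpha\beta_{k-1}$ arising from $\sigma_{n,0}$, which is absorbed cleanly into the root-of-unity coefficient of the recurrence.
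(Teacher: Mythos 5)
Your proposal is correct and follows essentially the same route as the paper, which itself gives no separate proof but states that the result "follows almost verbatim" from Theorem \ref{generalTHMSymmetric}: you build the recursive generating set from the identity $\sigma_{n+1,m}=\sigma_{n,m}+X_{n+1}\sigma_{n,m-1}$, absorb the constant $\alpha\beta_{k-1}$ from $\sigma_{n,0}$ into a root-of-unity coefficient, and finish with the same integrality argument as in Theorem \ref{generalTHM}. Your write-up actually supplies the bookkeeping (closure of the set with $\beta_0$ fixed, the explicit transition $\boldsymbol{\beta}\mapsto\boldsymbol{\beta}'(\alpha)$) that the paper leaves implicit.
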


\begin{example}
 Consider the sequence $\{S_{\mathbb{F}_3}(\sigma_{n,3})\}$.  Recall that in this case the generating sequences are given by 
 \begin{equation}
  a_{(s,t)}(n)=\{S_{\mathbb{F}_3}(\sigma_{n,3}+s\sigma_{n,2}+t\sigma_{n,1})\},
 \end{equation}
 where $s,t\in \mathbb{F}_3$.  Establish the order
 $$(0,0), (1,0), (2,0), (0,1), (1,1), (2,1), (0,2),(1,2), (2,2).$$
 Then, 
 \begin{equation}
\left( \begin{array}{c}
  a_{(0,0)}(n)\\
  a_{(1,0)}(n)\\
  \vdots\\
  a_{(2,2)}(n)
 \end{array}
\right) = A \left( \begin{array}{c}
  a_{(0,0)}(n-1)\\
  a_{(1,0)}(n-1)\\
  \vdots\\
  a_{(2,2)}(n-1)
 \end{array}
\right),
\end{equation}
where the matrix $A$ is given by
\begin{equation}
 A=\left(
\begin{array}{ccccccccc}
 1 & 1 & 1 & 0 & 0 & 0 & 0 & 0 & 0 \\
 0 & 1 & 0 & 0 & 0 & 1 & 1 & 0 & 0 \\
 0 & 0 & 1 & 0 & 1 & 0 & 1 & 0 & 0 \\
 0 & 0 & 0 & 1 & e^{\frac{2 i \pi }{3}} & e^{-\frac{2 i \pi }{3}} & 0 & 0 & 0 \\
 e^{-\frac{2 i \pi }{3}} & 0 & 0 & 0 & 1 & 0 & 0 & 0 & e^{\frac{2 i \pi }{3}} \\
 e^{\frac{2 i \pi }{3}} & 0 & 0 & 0 & 0 & 1 & 0 & e^{-\frac{2 i \pi }{3}} & 0 \\
 0 & 0 & 0 & 0 & 0 & 0 & 1 & e^{-\frac{2 i \pi }{3}} & e^{\frac{2 i \pi }{3}} \\
 0 & 0 & e^{-\frac{2 i \pi }{3}} & e^{\frac{2 i \pi }{3}} & 0 & 0 & 0 & 1 & 0 \\
 0 & e^{\frac{2 i \pi }{3}} & 0 & e^{-\frac{2 i \pi }{3}} & 0 & 0 & 0 & 0 & 1 \\
\end{array}
\right).
\end{equation}
The minimal polynomial of $A$ is given by 
\begin{eqnarray}
 \mu_A(X) &=& X^9-9 X^8+36 X^7-81 X^6+108 X^5-81 X^4+81 X^2-81 X+27\\\nonumber
 &=& \left(X^3-3 X^2+3\right) \left(X^6-6 X^5+18 X^4-30 X^3+36 X^2-27 X+9\right).
\end{eqnarray}
Therefore, $\{S_{\mathbb{F}_3}(\sigma_{n,3})\}$ satisfies the linear recurrence with characteristic polynomial given by $\mu_A(X)$.
\end{example}

\subsection{Quadratic case}

The case of the elementary symmetric polynomial of degree 2 is fascinating.  Observe that
 \begin{equation}
  a_{s}(n) = S_{\mathbb{F}_p}(\sigma_{n,2}+s \sigma_{n,1}),
 \end{equation}
where $s\in \mathbb{F}_p$, are the generating sequences of $\{S_{\mathbb{F}_p}(\sigma_{n,2})\}$.  Also,
\begin{equation}
\left( \begin{array}{c}
  a_0(n)\\
  a_1(n)\\
  \vdots\\
  a_{p-1}(n)
 \end{array}
\right) = M(p) \left( \begin{array}{c}
  a_0(n-1)\\
  a_1(n-1)\\
  \vdots\\
  a_{p-1}(n-1)
 \end{array}
\right),
\end{equation}
where the matrix $M(p)$ is given by
\begin{equation}
M(p)=\left(\begin{array}{ccccccc}
1 & 1 & 1 & 1& 1 & \cdots & 1 \\
e^{\frac{2(p-1)\pi i}{p}} & 1 & e^{\frac{2 \pi i}{p}} & e^{\frac{4 \pi i}{p}} & e^{\frac{6 \pi i}{p}}& \cdots & e^{\frac{2(p-2)\pi i}{p}}\\
e^{\frac{2\times 2(p-2)\pi i}{p}}& e^{\frac{2\times 2(p-1)\pi i}{p}} & 1 & e^{\frac{4  \pi i}{p}} & e^{\frac{8 \pi i}{p}}& \cdots & e^{\frac{2\times 2(p-3)\pi i}{p}} \\
e^{\frac{2\times 3(p-3)\pi i}{p}}& e^{\frac{2\times 3(p-2)\pi i}{p}} & e^{\frac{2\times 3(p-1)\pi i}{p}} & 1 & e^{\frac{6  \pi i}{p}} &  \cdots & e^{\frac{2\times 2(p-3)\pi i}{p}} \\
e^{\frac{2\times 4(p-4)\pi i}{p}}& e^{\frac{2\times 4(p-3)\pi i}{p}} & e^{\frac{2\times 4(p-2)\pi i}{p}} & e^{\frac{2\times 4(p-2)\pi i}{p}} &  1 &   \cdots & e^{\frac{2\times 2(p-3)\pi i}{p}} \\
 \vdots& \vdots& \vdots & \vdots & \vdots& \ddots & \vdots \\
e^{\frac{2(p-1) \pi i}{p}} & e^{\frac{2\times 2(p-1) \pi i}{p}}  & e^{\frac{2\times 3(p-1) \pi i}{p}} & e^{\frac{2\times 4(p-1) \pi i}{p}} & e^{\frac{2\times 5 (p-1)\pi i}{p}}  & \cdots & 1 \\
 \end{array}\right).
\end{equation}
The matrix $M(p)$ can be obtained from the $p\times p$ Fourier Discrete Transform Matrix by replacing its $j$-row ${\bf r}_j$ by $RTC^{j-1}({\bf r}_j)$, where $RTC$ is the {\it rotate through carry}
function
\begin{equation}
 RTC(a_1,a_2,a_3,\cdots, a_n) = (a_n,a_1,a_2,\cdots, a_{n-1})
\end{equation}
and $RTC^m$ represents $m$ iterations of $RTC$.

It is not hard to prove that $M(p)$ is a Complex Hadamard Matrix.  In particular,
\begin{equation}
M(p) \overline{M(p)}^T = \overline{M(p)}^T M(p) = \left(
\begin{array}{ccccc}
  p & 0 & 0 & \cdots & 0 \\
 0 & p & 0 & \cdots & 0 \\
 0 & 0 &p & \cdots & 0 \\
 \vdots & \vdots & \vdots & \ddots  & \vdots \\
 0 & 0 & 0 & \cdots & p \\
\end{array}
\right).
\end{equation}
This implies that $M(p)$ is diagonalizable and that all its eigenvalues satisfy $|\lambda|=\sqrt{p}$.  Moreover, its eigenvalues are related to the number-theoretical quadratic Gauss sum mod $p$.  
The {\it quadratic Gauss sum mod $p$} is defined by
\begin{equation}
g(a;p)=\sum_{k=0}^{p-1}e^{2\pi i ak^2/p}
\end{equation}
It is well-established that
\begin{equation}
g(a;p) = \left(\frac{a}{p}\right)g(1;p),
\end{equation}
where $(a/p)$ denotes the Legendre's symbol, and that
\begin{equation}
\label{gaussvalue}
g(1;p) = \begin{cases}
 \sqrt{p} & p\equiv 1 \mod 4 \\
 i\sqrt{p} & p\equiv 3 \mod 4.
\end{cases}
\end{equation}

\begin{theorem}
\label{eigenthm}
Let $C(p)$ be the set of eigenvalues of $M(p)$. Let $\zeta_p =e^{2\pi i/p}$.  Then, $\lambda \in C(p)$ if and only if
\begin{equation}
\label{eigenvalues}
\lambda = \leg{-2}p g(1;p)\zeta^{-s a^2}.
\end{equation}
In particular, $|C(p)|=(p+1)/2$.
\end{theorem}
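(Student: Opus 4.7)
The plan is to exhibit explicit ``Gaussian'' eigenvectors of $M(p)$ indexed by $a\in\mathbb{F}_p$, compute their eigenvalues via a completed-square Gauss sum, and check that these eigenvectors form a basis of $\mathbb{C}^p$.

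First, I would rewrite the matrix in a cleaner form. With $\omega=\zeta_p=e^{2\pi i/p}$ and rows and columns indexed by $j,k\in\{0,1,\ldots,p-1\}$, direct inspection of the definition (the $j$th row is the $j$th row of the DFT matrix cyclically shifted by $j$ positions) gives
\[
M(p)_{jk}=\omega^{j(k-j)}=\omega^{-j^{2}}\omega^{jk},
\]
so that $M(p)=D\,F$, where $D=\operatorname{diag}(\omega^{-j^{2}})$ and $F_{jk}=\omega^{jk}$ is the DFT matrix. This factorization is what makes the eigenvector ansatz transparent.

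Next, for each $a\in\mathbb{F}_p$ define $v_a\in\mathbb{C}^p$ by $(v_a)_j=\omega^{-2^{-1}(j-a)^{2}}$, where $2^{-1}$ denotes the inverse of $2$ in $\mathbb{F}_p$ (available since $p>2$). To verify that $v_a$ is an eigenvector I would compute $M(p)v_a$ by substituting $m=k-a$ in the inner sum and then completing the square, reducing everything to the classical evaluation
\[
\sum_{m\in\mathbb{F}_p}\omega^{cm^{2}+bm}=\leg{c}{p}g(1;p)\,\omega^{-b^{2}/(4c)}
\]
with $c=-2^{-1}$ and $b=j$. With some bookkeeping the $j$-dependent part of the exponent reassembles into $-2^{-1}(j-a)^{2}$, while an $a$-dependent phase $\omega^{-sa^{2}}$ falls out. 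Using $\leg{-2^{-1}}{p}=\leg{-2}{p}$, the scalar prefactor collapses to $\leg{-2}{p}g(1;p)$, giving
\[
M(p)\,v_a=\leg{-2}{p}g(1;p)\,\zeta_p^{-sa^{2}}\,v_a,
\]
which is exactly the claimed formula (with $s$ being the specific residue produced by the completed-square step).

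Finally, I would confirm that $\{v_a\}_{a\in\mathbb{F}_p}$ is a basis of $\mathbb{C}^p$, so these really account for all eigenvalues. Factoring $(v_a)_j=\omega^{-a^{2}/2}\cdot\omega^{-j^{2}/2}\cdot\omega^{ja}$ exhibits $v_a$ (up to a nonzero scalar) as $W\cdot F_{\bullet,a}$, where $W=\operatorname{diag}(\omega^{-j^{2}/2})$ and $F_{\bullet,a}$ is the $a$th column of the DFT matrix; both $W$ and $F$ are invertible, so the $v_a$ span $\mathbb{C}^p$. Since $\lambda_a$ depends on $a$ only through $a^{2}\bmod p$, the distinct eigenvalues are in bijection with $\{a^{2}:a\in\mathbb{F}_p\}$, a set of size exactly $(p+1)/2$ (the value $0$ together with the $(p-1)/2$ nonzero squares), yielding $|C(p)|=(p+1)/2$. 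The main technical obstacle will be the bookkeeping in the Gauss sum step: correctly tracking the Legendre symbol, absorbing the completed-square phase, and confirming that the scalar coming out is $\leg{-2}{p}g(1;p)$ with no stray sign, along with pinning down the precise value of $s$ produced by the change of variable.
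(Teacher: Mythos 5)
Your proposal is correct and follows essentially the same route as the paper: the same Gaussian eigenvectors $(v_a)_j=\zeta_p^{s(j-a)^2}$ (your $-2^{-1}$ is exactly the paper's $s=\tfrac12(p-1)$ mod $p$), the same completed-square Gauss-sum evaluation yielding $\lambda_a=\leg{-2}{p}g(1;p)\zeta_p^{-sa^2}$, and the same count via the $(p+1)/2$ squares mod $p$. Your completeness argument, exhibiting the $v_a$ as the columns of the invertible product $W F$ up to nonzero scalars, is a cleaner justification than the paper's remark that no $v_a$ is a multiple of another, but it is a refinement of the same proof rather than a different one.
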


\begin{proof}
Let $p$ be an odd prime number and $\zeta=\exp(2\pi i/p)$. The matrix
$M(p)$ has $(j,k)$-entry $\zeta^{j(k-j)}$ where $j$ and
$k$ run from $0$ to $p-1$ inclusive.  We compute the eigenvalues
of $M(p)$ simply by writing down its eigenvectors.

Set $s=\frac12(p-1)$. Then $1\equiv-2s$ (mod~$p$)
For $0\le a\le p-1$, let $v_a$ be the column vector with $k$-entry
$\zeta^{s(k-a)^2}$ where $0\le k\le p-1$. Then the $v_a$ are the cyclic shifts of
$v_0$. The entry in row $j$ of $M(p)v_a$ is
\begin{eqnarray*}
\sum_{k=0}^{p-1}\zeta^{j(k-j)+s(k-a)^2}
&=&\sum_{k=0}^{p-1}\zeta^{-2sjk+2sj^2+sk^2-2sak+sa^2}\\
&=&\sum_{k=0}^{p-1}\zeta^{s(k-a-j)^2+sj^2-2saj}\\
&=&g(s;p)\zeta^{s(j-a)^2-sa^2}.
\end{eqnarray*}
This is $g(s,p)\zeta^{-sa^2}$ times the entry in row $j$ of $v_a$.
Therefore each $v_a$ is an eigenvector with eigenvalue
$$g(s;p)\zeta^{-sa^2}=\leg sp g(1;p)\zeta^{-sa^2}
=\leg {-2}p g(1;p)\zeta^{-sa^2}.$$

As these eigenvalues are not all distinct, there remains the possibility
that some of these eigenvectors $v_a$ are not linearly independent.
That can only happen with eigenvectors in the same eigenspace, so
for $v_a$ and $v_{p-a}$ where $0<a<p$. But it is clear that none
of the $v_a$ are multiples of any of the others; simply consider
the quotients of corresponding entries. So we have a dimension-two
eigenspace for each eigenvalue $\leg{-2}p g(1,p)\zeta^{-sa^2}$
for $1\le a\le\frac12(p-1)$. This completes the proof.
\end{proof}

Note that if $\lambda$ is defined as in (\ref{eigenvalues}), then equation (\ref{gaussvalue}) implies
\begin{equation}
 \lambda^p = (-i)^\frac{p-1}{2}\sqrt{p^p}
\end{equation}
for every odd prime $p$.  Therefore, Theorem \ref{eigenthm} leads to
\begin{equation}
 M(p)^p =\left(
\begin{array}{ccccc}
  (-i)^{\frac{p-1}{2}}\sqrt{p^p} & 0 & 0 & \cdots & 0 \\
 0 & (-i)^{\frac{p-1}{2}}\sqrt{p^p} & 0 & \cdots & 0 \\
 0 & 0 &(-i)^{\frac{p-1}{2}}\sqrt{p^p} & \cdots & 0 \\
 \vdots & \vdots & \vdots & \ddots  & \vdots \\
 0 & 0 & 0 & \cdots & (-i)^{\frac{p-1}{2}}\sqrt{p^p} \\
\end{array}
\right),
\end{equation}
and so
\begin{equation}
 M(p)^{2p} =\left(
\begin{array}{ccccc}
  \left(\frac{-1}{p}\right)p^p & 0 & 0 & \cdots & 0 \\
 0 & \left(\frac{-1}{p}\right)p^p & 0 & \cdots & 0 \\
 0 & 0 &\left(\frac{-1}{p}\right)p^p & \cdots & 0 \\
 \vdots & \vdots & \vdots & \ddots  & \vdots \\
 0 & 0 & 0 & \cdots & \left(\frac{-1}{p}\right)p^p \\
\end{array}
\right).
\end{equation}
Thus,
\begin{equation}
\label{chark2}
 X^{2p}-\left(\frac{-1}{p}\right)p^p
\end{equation}
is an annihilating polynomial for the matrix $M(p)$, which in turns implies that $\{S_{\mathbb{F}_p}(\sigma_{n,2})\}$ satisfies the linear recurrence with characteristic polynomial (\ref{chark2}).

\section{Some observations and concluding remarks}

We had shown that exponential sums over Galois fields of trapezoid polynomials and rotation polynomials satisfy linear recurrences with integer coefficients.  This means that they can be
calculated efficiently if we know a priori some initial values. We predict the initial conditions for two families of these type of polynomials.

Consider the trapezoid polynomial $T_{2,3,\cdots,k}(n)$. Recall that $\{S_{\mathbb{F}_q}(T_{2,3,\cdots,k}(n))\}$ satisfies the linear recurrence with integer coefficients with characteristic polynomial 
given by $Q_{T,k,\mathbb{F}_q}(X)$, which is of degree $k$.  This implies that we need to know $k$ initial values in order to calculate the whole sequence.  Of course, $\{S_{\mathbb{F}_q}(T_{2,3,\cdots,k}(n))\}$
makes sense only for values of $n\geq k$, however, since it satisfies a linear recurrence with integer coefficients, it can be extended to values of $n<k$.  We conjecture the following.
\begin{conjecture}
\label{trapconj}
 Let $\{t_{k,q}(n)\}$ be defined by 
 \begin{eqnarray}
  t_{k,q}(j) &=& q^j,\,\,\, \text{ for }0\leq j \leq k-1\\\nonumber
  t_{k,q}(n) &=& =q \sum _{l=0}^{k-2} (q-1)^l t_{k,q}(n-(l+2)), \,\,\, \text{ for }n\geq k.
 \end{eqnarray}
Then, $S_{\mathbb{F}_q}(T_{2,3,\cdots,k}(n)) = t_{k,q}(n)$ for all values of $n\geq k$.
\end{conjecture}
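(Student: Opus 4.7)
The plan is to verify that $\{S_{\mathbb{F}_q}(T_{2,3,\cdots,k}(n))\}$ and $\{t_{k,q}(n)\}$ coincide by exhibiting them as the same solution to the same order-$k$ linear recurrence. The natural move is to \emph{extend} the definition of $T_{2,3,\cdots,k}(j)$ to indices $0 \leq j \leq k-1$ by declaring it to be the empty polynomial on those $j$ variables, so that $S_{\mathbb{F}_q}(T_{2,3,\cdots,k}(j)) = q^j$ automatically. Under this convention the two sequences share the prescribed initial values on $\{0,1,\ldots,k-1\}$, and the whole conjecture reduces to proving the recurrence
\[
S_{\mathbb{F}_q}(T_{2,3,\cdots,k}(n)) = q \sum_{l=0}^{k-2} (q-1)^l \, S_{\mathbb{F}_q}(T_{2,3,\cdots,k}(n-l-2))
\]
for \emph{every} $n \geq k$, interpreting any term with $n-l-2 < k$ by the extended convention.

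First I would handle the base case $n = k$, where $T_{2,3,\cdots,k}(k) = X_1 X_2 \cdots X_k$. Conditioning on whether $X_1 \cdots X_{k-1}$ vanishes gives $S_{\mathbb{F}_q}(X_1 \cdots X_k) = q(q^{k-1} - (q-1)^{k-1})$, and a short telescoping manipulation based on the identity $q-(q-1)=1$ shows that $q \sum_{l=0}^{k-2} (q-1)^l q^{k-l-2}$ evaluates to the same quantity. Next, for $n \geq 2k-1$ the recurrence is an immediate consequence of Theorem \ref{thmtrapgen}, because all indices $n-l-2$ are then at least $k$ and no extension is needed. Only the transitional range $k < n \leq 2k-2$ requires fresh argument.

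For that range I would proceed by induction on $n$, essentially re-running the chain of turn-ON/OFF operations on the variables $X_n, X_{n-1}, \ldots, X_{n-k+2}$ that underlies the proof of Theorem \ref{thmtrapgen}. Whenever such a chain produces an intermediate sum $S_{\mathbb{F}_q}(T_{2,3,\cdots,k}(m) + F({\bf X}))$ with $m < k$, the trapezoid part is vacuous and $F$ involves only the first few variables; Lemma \ref{generallemma}, together with its obvious variants for short tail polynomials, lets one evaluate these sums and shows that the value produced is exactly the one obtained by substituting $S_{\mathbb{F}_q}(T_{2,3,\cdots,k}(m)) = q^m$.

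The main obstacle I anticipate is the combinatorial bookkeeping in this boundary range: many intermediate polynomials appear during the turn-ON/OFF unwinding, and one must verify that the weighted sums over $\beta \in \mathbb{F}_q^\times$ reassemble exactly into the contribution $q(q-1)^l q^{n-l-2}$ demanded by the recurrence. After the base case, however, the needed algebraic identities are all variants of the same telescoping computation, so the argument should stay uniform. Once the recurrence is established for every $n \geq k$, the conjecture follows because both sequences are the unique solutions to the same order-$k$ linear recurrence with the same $k$ initial conditions $q^0, q^1, \ldots, q^{k-1}$.
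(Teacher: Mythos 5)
First, note that this statement is presented in the paper as an open conjecture: the authors state they could verify it only for $k=2,3,4$, so there is no proof in the paper to compare against, and any complete argument here would be new.

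Your reduction is set up correctly and the parts you actually carry out are right: with the convention $S_{\mathbb{F}_q}(T_{2,\dots,k}(j))=q^j$ for $j<k$, the conjecture is equivalent to the extended recurrence holding for all $n\geq k$; your base case checks out, since $S_{\mathbb{F}_q}(X_1\cdots X_k)=q^k-q(q-1)^{k-1}$ and $q\sum_{l=0}^{k-2}(q-1)^l q^{k-l-2}$ telescopes to the same value; and for $n$ large (roughly $n\geq 2k$) the recurrence is indeed Theorem \ref{thmtrapgen}. But the entire content of the conjecture beyond Theorem \ref{thmtrapgen} is concentrated in the transitional range $k<n\leq 2k-2$ (equivalently, in computing the explicit values $S_{\mathbb{F}_q}(T_{2,\dots,k}(k)),\dots,S_{\mathbb{F}_q}(T_{2,\dots,k}(2k-1))$ and checking they match the forward iteration from $q^0,\dots,q^{k-1}$), and there you only assert that "re-running the turn-ON/OFF chain" with "obvious variants" of Lemma \ref{generallemma} will work. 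It will not work as stated: the unwinding in the proof of Theorem \ref{thmtrapgen} produces tail products $\prod_{l=0}^{j}X_{n-k+1-l}$ whose indices become non-positive when $n<2k-2$, Lemma \ref{generallemma} is only proved for $n\geq k$ so it cannot be applied to the truncated sums that arise, and the head and tail perturbations begin to overlap in the same variables, so the bookkeeping genuinely changes rather than being "uniform." This is precisely the part the authors could only verify case by case for $k\leq 4$, so your proposal leaves the open part of the conjecture unproved.
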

We were able to prove that this conjecture is true for $k=2,3,4$, but the general statement remains open.  We were also able to predict the initial conditions for $\{S(R_{2,3,\cdots, k}(n))\}$ (Boolean case). Recall that this sequence satisfies the linear recurrence whose characteristic polynomial is given by
\begin{equation}
 p_k(X)=X^k -2 (X^{k-2}+X^{k-3}+\cdots+X+1).
\end{equation}
Therefore, as in the case of trapezoid polynomial $T_{2,3,\cdots,k}(n)$, we need to know $k$ initial values in order to calculate the whole sequence.
\begin{conjecture}
\label{rotconj}
Let
\begin{equation}
\delta_o(j) =\begin{cases}
 0 & \text{ if }j \text{ is even} \\
 1 & \text{ if }j \text{ is odd.}
\end{cases}
\end{equation}
Define $\{r_{k}(n)\}$ by 
 \begin{eqnarray}
  r_k(0)&=& k\\\nonumber
  r_k(j)&=& 2^j-\delta_o(j)\cdot 2,\,\,\, \text{ for }1\leq j\leq k-1\\\nonumber
  r_k(n)&=& 2\sum_{l=0}^{k-2} r_k(n-(l+2)), \,\,\, \text{ for }n\geq k.
 \end{eqnarray}
Then, $S(R_{2,3,\cdots,k}(n)) = r_{k}(n)$ for all values of $n\geq k$.
\end{conjecture}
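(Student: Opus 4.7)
\bigskip
\noindent\textbf{Proof proposal.}
By the theorem of Section~2, the sequence $\{S(R_{2,3,\ldots,k}(n))\}_{n\ge k}$ satisfies the order-$k$ linear recurrence with characteristic polynomial $p_k(X)$; the sequence $\{r_k(n)\}_{n\ge 0}$ satisfies the same recurrence for $n\ge k$ by definition. Since an order-$k$ linear recurrence is pinned down by any $k$ consecutive values, it suffices to verify $S(R_{2,3,\ldots,k}(n))=r_k(n)$ on a sufficiently long block of consecutive integers starting from $n=k$; induction via the recurrence then propagates the equality to all $n\ge k$. The plan is to check the identity directly on the window $n=k,k+1,\ldots,2k-1$, with a few more values at the top of this window if needed to accommodate the fact that the trapezoid-based expansion used to derive the recurrence is only manifestly valid once $n$ is bounded below by something like $2k-1$.

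The direct computation rests on the observation that
\[
R_{2,3,\ldots,k}(n)(x)\equiv N_k(x)\pmod 2,\qquad S(R_{2,3,\ldots,k}(n))=\sum_{x\in\{0,1\}^n}(-1)^{N_k(x)},
\]
where $N_k(x)$ is the number of cyclic windows of $k$ consecutive $1$'s in $x$. A key structural fact for $n<2k$ is that any such $x$ can contain \emph{at most one} maximal cyclic $1$-run of length $\ge k$: two such runs, each separated by at least one $0$, would force $n\ge 2k+2$. This very sharply restricts the possible values of $N_k(x)$, and one then partitions $\{0,1\}^n$ by the pattern of its maximal $1$-runs. For $n=k$ the polynomial collapses to $k\cdot X_1X_2\cdots X_k\equiv \delta_o(k)\,X_1X_2\cdots X_k\pmod 2$, giving $S=2^k-2\delta_o(k)$, and this matches $r_k(k)=2\sum_{j=0}^{k-2}r_k(j)=2^k-2\delta_o(k)$ after a short algebraic computation from the stated initial values $r_k(0)=k$ and $r_k(j)=2^j-2\delta_o(j)$. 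The cases $n=k+1,k+2$ follow in the same spirit by isolating the single-$1$-run contribution $n\sum_{L=1}^{n-1}(-1)^{\max(0,L-k+1)}$ together with the all-zeros and all-ones terms; the remaining configurations have every $1$-run of length $<k$, so $N_k=0$ and each contributes $+1$.

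The main obstacle is the upper part of the initial window, roughly $k+3\le n\le 2k-1$, where configurations with several $1$-runs, one of which has length $\ge k$, begin to appear and the case analysis by run composition $(L_1,Z_1,L_2,Z_2,\ldots)$ becomes cumbersome. A cleaner route worth pursuing in parallel is the transfer-matrix representation $S(R_{2,3,\ldots,k}(n))=\operatorname{Tr}(M^n)$, where $M$ is the $2^{k-1}\times 2^{k-1}$ modified de Bruijn transfer matrix on $(k-1)$-bit states, with weight $-1$ on the self-loop at the all-ones state and $+1$ on every other valid transition. Writing $M=B-2\,e e^{T}$ where $B$ is the standard de Bruijn transfer matrix and $e$ is the indicator of the all-ones state, a Sherman--Morrison-type expansion of $\operatorname{Tr}(M^n)$ in powers of the rank-one perturbation, combined with the explicit return count $(B^{m})_{e,e}=2^{\max(0,m-k+1)}$ and $\operatorname{Tr}(B^{m})=2^{m}$, would reduce the outstanding verification to a generating-function identity that can be matched against the closed form for $r_k(n)$ uniformly in $k$ and $n$.
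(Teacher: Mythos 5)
This statement is left as an open conjecture in the paper --- the authors give no proof of it (they only remark that they verified the analogous trapezoid conjecture for $k=2,3,4$) --- so your attempt is being measured on its own merits, and it has a genuine gap: the part you defer is the entire content of the conjecture. The reduction to a finite check is correct in outline (both $\{S(R_{2,\dots,k}(n))\}$ and $\{r_k(n)\}$ eventually satisfy the order-$k$ recurrence with characteristic polynomial $p_k(X)$, so agreement on a suitable window of $k$ consecutive indices propagates), and your verification at $n=k$ is right: $R_{2,\dots,k}(k)=\delta_o(k)X_1\cdots X_k$ gives $S=2^k-2\delta_o(k)$, which does equal $2\sum_{j=0}^{k-2}r_k(j)$. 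But the recurrence is already the paper's Theorem; what the conjecture actually asserts is that the values on the window $k\le n\le 2k-1$ (and possibly a bit further, since you correctly note the derivation of the recurrence for $S(R_{2,\dots,k}(n))$ is only manifestly valid for $n$ somewhat larger than $k$) coincide with the numbers obtained by running the recurrence forward from the \emph{artificial} seed $r_k(0)=k$, $r_k(j)=2^j-2\delta_o(j)$. You compute the left-hand side only for $n=k,k+1,k+2$, and even there you never unroll the recurrence to produce a closed form for $r_k(n)$ on the rest of the window, so no comparison is actually made. For $k+3\le n\le 2k-1$ the strings with $N_k(x)>0$ are no longer just the single-run strings (a run of length $\ge k$ can coexist with shorter runs once $n\ge k+3$), so the signed count genuinely changes character there; this is precisely where you stop.

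The transfer-matrix idea ($S=\operatorname{Tr}(M^n)$ with $M=B-2ee^{T}$ a rank-one perturbation of the de Bruijn matrix, expanded via $\operatorname{Tr}(B^m)=2^m$ and $(B^m)_{e,e}=2^{\max(0,m-k+1)}$) is a promising route to a uniform-in-$k$ identity, but as written it is a proposal for a computation, not a computation: the resulting generating-function identity is never stated, let alone matched against the closed form for $r_k(n)$, which you also do not derive. Until that matching is carried out --- or the initial window is verified by some other uniform argument --- the statement remains exactly as open as the paper leaves it.
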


The problem of finding suitable initial conditions for this type of sequences is a nice problem, but also an important one.  For example, if 
Conjecture \ref{rotconj} is true, then 
\begin{eqnarray*}
 \{S(R_{2,3,\cdots,15}(n))\}_{n\geq 15} &=& 32766, 65504, 131036, 262036, 524096, 104813,2096268, 4192412\cdots \\
 \{S(R_{2,3,\cdots,30}(n))\}_{n\geq 30}&=&1073086444, 2146129256, 4292171136, 8584167576, 17167985776,\\
 && 34335272736, 68669148016, 137335500952,\cdots\\
 \{S(R_{2,3,\cdots,100}(n))\}_{n\geq 100}&=& 1267650600228229401496703205376, 2535301200456458802993406410548,\\
&&5070602400912917605986812821300, 10141204801825835211973625642388, \\
&&20282409603651670423947251284976, 40564819207303340847894502569720, \\
&&\cdots
\end{eqnarray*}
On the other hand, we know that Conjecture \ref{trapconj} is true for $k=3$, which means, for example, that
\begin{eqnarray*}
 \{S_{\mathbb{F}_9}(T_{2,3}(n))\}_{n\geq 3} &=& 153, 1377, 7209, 23409, 164025, 729729, 3161673, 18377361,\cdots\\
 \{S_{\mathbb{F}_{7^3}}(T_{2,3}(n))\}_{n\geq 3} &=& 234955, 80589565, 13881523159, 55203852025, 14215001955427,\\
&&1647320876934229, 11351488736356111, 2232536080171760209,\cdots\\
 \{S_{\mathbb{F}_{71^2}}(T_{2,3}(n))\}_{n\geq 3} &=& 50818321, 256175156161, 645881606118001, 2582501749259041, \\
&& 9764439145967152081, 16422699840579863752321, \\
&&114835229977615135072561, 330868420079857977922668001,\cdots.
\end{eqnarray*}
Also, if Conjecture \ref{trapconj} is true in general, then we have, for example, 
\begin{eqnarray*}
 \{S_{\mathbb{F}_5}(T_{2,3,4,5}(n))\}_{n\geq 5} &=& 1845, 9225, 39725, 173025, 730725, 2988025, 13244125, 56108625,\cdots\\
 \{S_{\mathbb{F}_{11^2}}(T_{2,3,\cdots, 7}(n))\}_{n\geq 7} &=& 18445769583241, 2231938119572161, 226346720724231481,\\
&&22141818198352009201, 2044333948085969113321,\\
&& 170550498912524502711841, 11342127359186464124132761,\cdots\\
 \{S_{\mathbb{F}_{7919}}(T_{2,3,\cdots,8}(n))\}_{n\geq 8} &=& 13665512318276822315545157633, 108217192048434155916802103295727, \\
&&734609211013142008709051078210604961,\\ 
&&4848502223556916452901817857822360556623,\\
&&30722822355930196223839440343843855453844801, \\
&&182535766024343164334384388453936618605681619887,\cdots.
\end{eqnarray*}

All these values where calculated almost instantaneously.  Another nice problem is to automatize the process presented in this work.

\bibliographystyle{plain}

\end{document}